\newtheorem{thm}{Theorem}[subsection]
\newtheorem*{thmo}{Theorem}
\newtheorem*{lemo}{Lemma}
\newtheorem*{coro}{Corollary}
\newtheorem{lem}[thm]{Lemma}
\newtheorem{prop}[thm]{Proposition}
\newtheorem{cor}[thm]{Corollary}
\theoremstyle{definition}
\newtheorem{defi}[thm]{Definition}
\newtheorem{exam}[thm]{Example}
\newtheorem{rema}[thm]{Remark}
\newcommand{\colim}{\operatornamewithlimits{{colim}}}
\newcommand{\hocolim}{\operatornamewithlimits{{hocolim}}}
\newcommand{\holim}{\operatornamewithlimits{{holim}}}
\newcommand{\RR}{\mathbb{R}}      
\newcommand{\Hom}{\operatorname{Hom}}
\newcommand{\Map}{\operatorname{Map}}
\newcommand{\RHom}{\operatorname{RHom}}
\newcommand{\RMap}{\operatorname{RMap}}
\newcommand{\RNat}{\operatorname{RNat}}
\newcommand{\Aut}{\operatorname{Aut}}
\newcommand{\Fun}{\operatorname{Fun}}
\newcommand{\hofib}{\operatorname{hofib}}
\newcommand{\Sc}{\operatorname{Sc}}
\newcommand{\Free}{\operatorname{Free}}
\newcommand{\Conf}{\operatorname{Conf}}
\newcommand{\Id}{\operatorname{Id}}
\newcommand{\free}{\operatorname{free}}
\newcommand{\sdSet}{\textup{\textsf{sdSet}}}
\newcommand{\sodSet}{\textup{\textsf{scdSet}}}
\newcommand{\odSet}{{\textup{\textsf{cdSet}}}}
\newcommand{\sSetOp}{\textup{\textsf{sSetOp}}}
\newcommand{\sSet}{\textup{\textsf{sSet}}}
\newcommand{\Set}{\textup{\textsf{Set}}}
\newcommand{\dSet}{\textup{\textsf{dSet}}}
\renewcommand{\lg}{\langle}
\newcommand{\rg}{\rangle}
\newcommand{\bound}{\textup{bound}}
\newcommand{\cobound}{\textup{cobound}}
\newcommand{\corl}{\textup{c}}
\newcommand{\ccorl}{\textup{cc}}
\newcommand{\co}{\colon}
\newcommand{\op}{\textup{op}}
\newcommand{\cl}{\textup{cl}}
\newcommand{\smin}{\smallsetminus}
\newcommand{\sh}{\textup{sh}}
\newcommand{\acat}{\textup{\textsf{V}}_n}
\newcommand{\bcat}{\textup{\textsf{W}}_n}
\newcommand{\rran}{\textup{RRan}}
\newcommand{\rlan}{\textup{RLan}}
\newcommand{\inco}{\textup{in}}
\title{A spectral sequence for spaces of maps between operads}
\author{Florian G\"oppl and Michael Weiss}
\begin{document}

\begin{abstract} Under mild conditions on topologically enriched operads $P$ and $Q$, the derived mapping space
$\RHom(P,Q)$ is the limit (sequential homotopy inverse limit) of a tower whose $n$-th layer admits a description in terms of
certain (small) diagrams
$J_n(P)$ and $J_n(Q)$. More precisely $J_n(P)$ is a 3-term diagram of spaces with action of $\Sigma_n$, of the form
\[ \bound_n(P)\to P(n)\to \cobound_n(P) \]
where $P(n)$ is the space of $n$-ary operations in $P$. The statement takes some inspiration from manifold
calculus, but the proof relies on the homotopical theory of dendroidal spaces and the concept of dendroidal nerve
of an operad.
\end{abstract}

\maketitle

\section{Introduction} \label{sec-intro} 

\label{Introduction} 

Operads are tools well-suited to describe and classify additional algebraic structures on objects in symmetric monoidal categories.
On the other hand they are a natural generalization of (enriched) categories allowing morphisms to have any finite number of sources.
Operads (in a more restrictive one-object setting) were first defined by Peter May
in \cite{May72}. Closely related notions can be seen in the earlier book by Boardman-Vogt \cite{BV68} and a specific
operad emerged earlier still in the work of Stasheff \cite{Stash} and Sugawara \cite{Sug}. For a
very readable survey and exposition see \cite[\S2]{Ada}.
In his book, May proved the famous recognition principle which gives an ``operadic'' characterization of based spaces which are
homotopy equivalent to some $n$-fold loop space.
Operads have since appeared in various branches of mathematics and mathematical physics.
The principal aim of this investigation was to find a way to understand spaces of maps between operads.
The ``little disk'' operads are important examples and test cases. \\
We will do this by translating the problem into the language of dendroidal spaces. These are contravariant functors from a certain category $\Omega$ of trees
to the category $\sSet$ of simplicial sets.
The theory of dendroidal sets and dendroidal spaces was introduced by Ieke Moerdijk and
Ittay Weiss in \cite{MW07} (see also \cite{W07})
and further investigated by Cisinski and Moerdijk in \cite{CM09,CM11,CM13}.
A simplicially enriched operad $P$ determines a dendroidal space $N_dP$, known as the dendroidal nerve of $P$. There is a
map of derived mapping spaces from $\RHom(P,Q)$ to $\RHom(N_dP,N_dQ)$ which  is a weak equivalence in the cases we are interested in.  \\
Although derived mapping spaces have a standard description in the context of model categories, we will mostly avoid this
description and rely on the description due to Dwyer-Kan instead \cite{DK80b}. They construct derived mapping spaces for objects in any category $C$ equipped
with a wide subcategory $W$ (whose morphisms play the role of weak equivalences). If $W$ happens to be the subcategory of weak equivalences
in a model category, then these constructions yield weakly equivalent results. For our purposes a morphism of dendroidal
spaces is a weak equivalence if and only if it is a levelwise
equivalence of simplicial sets. The goal, then, is to understand the homotopy type of derived mapping spaces between dendroidal
nerves of (some) operads. \\
We approach this problem by mapping the space $\RHom(N_dP,N_dQ)$ to a tower of derived
mapping spaces obtained by restricting $N_dP$ and $N_dQ$ to certain subcategories $\Omega\lg k\rg$,
where $0 \leqslant k < \infty$.
The subcategory $\Omega\lg k\rg$ of $\Omega$ is the full subcategory on trees with vertices of valence $\leqslant k+1$ only (to put it
differently, trees in which no vertex has more than $k$ \emph{incoming} edges).
We note that these categories $\Omega\lg k\rg$ are closed under grafting of trees. Contravariant functors from $\Omega\lg k\rg$ to
the category of simplicial sets
will be called $k$-truncated dendroidal spaces. A morphism of truncated dendroidal spaces is a weak equivalence if it is a levelwise weak equivalence
of simplicial sets. With these definitions it is clear that the restriction functor $U_k$ from dendroidal spaces to $k$-truncated ones preserves
all weak equivalences and thus induces maps $\RHom(X,Y) \rightarrow \RHom(U_kX,U_kY)$ for all dendroidal spaces $X$ and $Y$.
We arrange these maps in a tower
 \[
\xymatrix@R=15pt{
 && \rule{12mm}{0mm}\vdots\rule{12mm}{0mm} \ar[d]  \\
 && \RHom(U_3X,U_3Y) \ar[d] \\
 && \RHom(U_2X,U_2Y) \ar[d] \\
\RHom(X,Y) \ar[rr] \ar@/^1.1pc/[urr] \ar@/^1.8pc/[uurr] \ar@/^2.6pc/[uuurr] && \RHom(U_1X,U_1Y)
}
\]
of derived mapping spaces. \\
In section~\ref{subsec-towconst} we set up a ``d\'evissage'' mechanism for proving homotopical
statements in categories of contravariant
functors (with values in $\sSet$) with levelwise weak equivalences. We show that every functor admits a weak equivalence from a free CW-functor, a more
restrictive instance of the concept of CW-functor in Dror-Farjoun \cite{DF}. These are functors admitting a CW-type decomposition
into cells of the shape $\Hom(-,c) \times \Delta[k]$.
We make use of this approximation to prove certain homotopical properties for contravariant functors. More precisely we show
that we can verify such a property by showing that it holds for representable functors and that it persists under
formation of homotopy pushouts and disjoint unions.
Our first application of this principle is the following statement. (Admittedly this is unsurprising, and it might have shorter
proofs and might be regarded as obvious by some readers.)
\begin{lemo} \emph{(= Lemma~\ref{TowerConvergence}, Corollary~\ref{corfilt}.)}
 The above tower of derived mapping spaces converges, i.e., for all dendroidal spaces $X$ and $Y$ the induced map
\[\RHom(X,Y) \to \holim_k \RHom(U_kX,U_kY) \]
is a weak homotopy equivalence.
\end{lemo}
Under additional assumptions on our objects the homotopy fibers of this tower can be simplified.
A dendroidal Segal space is called $1$-reduced if its values on the trivial tree and
the $0$-corolla are points and its value on the $1$-corolla is contractible. The most important example is the dendroidal nerve of a
$1$-reduced simplicially enriched operad $P$. These are operads $P$ that only have one object
and satisfy $P(0) = *$ and $P(1) \simeq *$. This notion still captures our most important examples
since all $E_n$ operads and many more are $1$-reduced.
In this setting we can, instead of working in the category $\sdSet$ of dendroidal spaces, restrict attention to
an easier category $\sodSet$. This is
based on a category of \emph{closed} trees $\Omega_{\cl}\subset\Omega$. \\
Using this model we define operadic boundary and coboundary objects reminiscent of the latching and matching objects, respectively,
from the theory of Reedy categories. Let $\ccorl_k$ be the closed $k$-corolla, an important object of $\Omega_{\cl}$ with a preferred
action of $\Sigma_k$. Evaluating objects in $\sodSet$ at $\ccorl_k$ gives a functor
\[ X\mapsto X_{\ccorl_k} \]
from $\sodSet$ to the category of simplicial sets
with $\Sigma_k$-action. (If $X=N_dP$ where $P$ is a $1$-reduced operad, then $X_{\ccorl_k}\simeq P(k)$.) We define two more functors
$\bound_k$ and $\cobound_k$ from $\sodSet$ to simplicial sets with $\Sigma_k$-action, and
natural $\Sigma_k$-maps
\[ \bound_k X \to X_{\ccorl_k} \to \cobound_k X\,. \]
Let $J_k(X)$ be the diagram just above, $\bound_k X \to X_{\ccorl_k} \to \cobound_k X$, and let $\partial J_k(X)$ be the
shorter diagram
\[ \bound_k X \to \cobound_k X \]
obtained by composing the two arrows in $J_k(X)$.
Both of these are understood to be diagrams in the category of simplicial sets with an action of the symmetric group $\Sigma_k$.
(For the present purposes a morphism of simplicial sets with $\Sigma_k$-action will be regarded as a weak
equivalence if the underlying morphism in $\sSet$ is a weak equivalence.)
\begin{thmo} \emph{(= Theorem~\ref{mainthm}.)}
Let $X$ and $Y$ be $1$-reduced dendroidal Segal spaces, to be viewed as objects of $\sodSet$.
Then the following square is a homotopy pullback square:
\[
\begin{tikzcd}
\RHom(U_kX,U_kY) \arrow[d] \arrow[r] & \RNat(J_kX,J_kY) \arrow[d]  \\
\RHom(U_{k-1}X,U_{k-1}Y) \arrow[r]& \RNat(\partial J_kX, \partial J_kY)
\end{tikzcd}
\]
\end{thmo}
(See also remark \ref{remarkThomas} for a slightly different formulation.)
This allows us to make some homotopical computations with $\RHom(E_n,E_{n+d})$.

\begin{thmo}
 The homotopy fiber of
 \[
 \RNat(U_kE_n, U_kE_{n+d}) \rightarrow \RNat(U_{k-1}E_n,U_{k-1}E_{n+d})
 \]
 is $((k-1)(d-2)+1)$-connected.
\end{thmo}

Combining this computation with the first result gives us the following estimate for the connectivity of the space $\RHom(E_n,E_{n+d})$.

\begin{coro}
 Assume $d \geq 2$. The derived mapping space $\RHom(E_n,E_{n+d})$ is $(d-1)$-connected. The spaces
 $\RHom_k(U_kE_n,U_kE_{n+d})$ are $(d-1)$-connected as well, for all $k\ge 1$.
\end{coro}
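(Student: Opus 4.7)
The plan is to combine the convergence of the tower (first lemma) with the fibre connectivity estimate (preceding theorem) in an inductive argument that then propagates through the homotopy limit.

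First I observe that $E_n$ and $E_{n+d}$ are $1$-reduced operads, so their dendroidal nerves are $1$-reduced dendroidal Segal spaces, and the convergence lemma identifies $\RHom(E_n,E_{n+d})$ with $\holim_k \RNat(E_n|_{\overline{\Omega}_k},E_{n+d}|_{\overline{\Omega}_k})$ (the passage from $\sdSet$ to $\sodSet$ being harmless on $1$-reduced objects, up to the comparison of indexing conventions between $U_k$ and restriction to $\overline{\Omega}_k$). The base of the tower is contractible: on every tree of the lowest level of $\overline{\Omega}$, $1$-reducedness forces both restricted functors to take contractible values, so the natural transformation space between them is contractible.

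Next I induct on $k$, showing that each $\RNat(E_n|_{\overline{\Omega}_k},E_{n+d}|_{\overline{\Omega}_k})$ is $(d-1)$-connected. Granting the claim for $k-1$, the preceding theorem says the homotopy fibre between levels $k$ and $k-1$ is $((k-1)(d-2)+1)$-connected, and the elementary inequality $(k-1)(d-2)+1 \geq d-1$ holds for all $k \geq 2$ and $d \geq 2$. The long exact sequence of the resulting fibre sequence then assembles a $(d-1)$-connected base and a $(d-1)$-connected fibre into a $(d-1)$-connected total space, completing the induction and already proving the second assertion of the corollary.

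Finally, to transfer the connectivity to the homotopy limit I invoke the Milnor $\lim^1$ sequence: since $\pi_i$ of every stage vanishes for $i \leq d-1$, both $\lim \pi_i$ and $\lim^1 \pi_{i+1}$ vanish in that range, so $\holim$ is $(d-1)$-connected and hence so is $\RHom(E_n,E_{n+d})$. The hardest step in this outline is not the homotopical book-keeping but the preliminary comparison: one must check that the dendroidal $k$-truncation $\RHom_k(U_kE_n,U_kE_{n+d})$ appearing in the statement matches the reduced $\RNat(E_n|_{\overline{\Omega}_k},E_{n+d}|_{\overline{\Omega}_k})$ supplied by the preceding theorem, and that the indexing of the two towers aligns so that the inductive base case is genuinely contractible; once this is settled the rest is a clean exact-sequence chase.
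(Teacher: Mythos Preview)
Your argument is correct and is exactly the implicit argument the paper intends: the corollary is stated in the paper with no more than ``Thus we get:'' after the fibre-connectivity theorem, and your induction up the tower together with the convergence lemma is precisely how one is meant to read that. One small tightening: your justification for the vanishing of $\lim^1\pi_{i+1}$ only covers $i\le d-2$ directly, since $\pi_d$ of the stages need not be zero; but the fibres being $(d-1)$-connected forces the maps $\pi_d(X_k)\to\pi_d(X_{k-1})$ to be surjective, so Mittag--Leffler gives $\lim^1\pi_d=0$ as well.
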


The above theorem and the corollaries are reminiscent of fundamental results in the manifold calculus and can also be used in this context.
A first application can be found in \cite[\S3.4,\S5]{W16}.

\smallskip
The operadic boundary and coboundary objects have also been investigated and used
in \cite{FTW18} and \cite{Heu18} in slightly different settings. Similar constructions can be seen in \cite{Thu}.

\medskip
\emph{Authorship.} Apart from minor revisions, this is the PhD thesis of Florian G\"oppl
(PhD degree in 2019 at WWU M\"unster). The PhD supervisor at the time was M.Weiss. Florian G\"oppl is no longer active in topology research,
but his thesis was well received and as time went by, the case for publishing it became stronger, not weaker.
It fell to M.W. to revise and submit the work and act as corresponding author, although he is hardly an author or co-author
of the article.

\smallskip
\emph{Acknowledgment.} We are indebted to Thomas Nikolaus for some helpful suggestions.

\section{Operads and dendroidal objects}   \label{sec-opdend} 

The purpose of this first section is to explain and motivate the notion of an operad. (The section is not a self-contained
introduction to the homotopy theory of operads and dendroidal objects.)
In the first part of the section we will give a short exposition of the basic definitions and theorems.
The second part is devoted to some closely related notions more approachable by homotopical methods.
The theory of dendroidal sets was introduced by Ieke Moerdijk and Ittay Weiss in \cite{MW07}. A dendroidal object is a
contravariant functor on an indexing category of trees. An important subclass of trees are the linear ones and
contravariant functors on this subcategory are simplicial objects.
Most of the homotopical constructions for simplicial spaces generalize to the dendroidal setting.
Our focus will be on the dendroidal analogue of (complete) Segal spaces \cite{Rez01}. \\
Throughout this article we will make some use of the theory of model categories.
A model structure on a bicomplete category $\textsf{C}$ is defined by a
triple $(\mathcal{C}o, \mathcal W, \mathcal{F}i)$ of wide subcategories of $\textsf{C}$.
(A subcategory is called wide if it contains all identity morphisms.)
These classes have to satisfy certain lifting properties analogous to the
cofibrations, weak equivalences and fibrations of topological spaces.
Although we are mostly interested in derived mapping spaces and these only depend on a class of weak equivalences, the
additional structure given by fibrations and cofibrations provides useful tools for computations of
mapping spaces and derived functors.


\subsection{Operads}

\begin{defi}
 An \emph{operad} $P$ consists of a set of objects $\{x_i\}$ and for every $(n+1)$-tuple $(x_1, \ldots, x_n; x)$ of objects
 a set of morphisms $P(x_1, \ldots, x_n; x)$ subject to the following axioms:
 \begin{itemize}
 \item A morphism $id_x \in P(x;x)$ called \emph{identity of $x$}.
 \item An associative composition morphism
 \[
 \begin{tikzcd}
 P(y_1, \ldots, y_n; z) \times P(x_{1,1}, \ldots, x_{1,k_1}; y_1) \times \ldots \times P(x_{n,1}, \ldots, x_{n,k_n}; y_n) \arrow[d] \\ P(x_{1,1}, \ldots x_{n,k_n}; z).
 \end{tikzcd}
 \]
 \item For every $(n+1)$-tuple $(x_1, \ldots x_n; y)$ and every $\sigma \in \Sigma_n$ a bijection
 \[
 \sigma^* \colon P(x_1, \ldots x_n; y) \rightarrow P(x_{\sigma(1)}, \ldots x_{\sigma(n)}; y)
 \]
 respecting the other structure.
\end{itemize}
A more complete definition is given in \cite[1.1]{BM07}. \\
A morphism of operads $f \colon P \rightarrow Q$ consists of a map between objects \[ ob(P) \rightarrow ob(Q) \] and structure preserving maps
\[ P(x_1, \ldots, x_n; x) \rightarrow Q(f(x_1), \ldots, f(x_n); f(x)). \]
An operad is called \emph{monochromatic} if it has only one object.
\end{defi}

The operadic (multi-)composition can be understood more easily by picturing the multimorphisms as special (planar) trees
(so called \emph{corollas}) with several ``input'' edges and a unique ``output'' edge. Names of objects (sources and target) should be attached
to edges and the name of the multimorphism
can be attached to the unique vertex as a label. Grafting leads to a more complicated tree with several labels.
The following depicts the grafting of a $2$-morphism $u$ with two $3$-morphisms $v$ and $w$ in the monochromatic case,
where the labeling of edges with objects is unnecessary.
\[
\begin{tikzpicture}
\draw (0,2) --(1,1) node[at end,left]{$u$}  ;
\draw (2,2) --(1,1) node[midway,right] {};
\draw (1,1) --(1,0) node[midway,left] {};
\draw[fill] (1,1) circle [radius=0.07];
\draw (3,2) --(4,1) node[at end,left]{$v$} ;
\draw (5,2) --(4,1) node[midway,right]{};
\draw (4,1) --(4,0) node[midway,right]{};
\draw (4,2) --(4,1) node[midway,left]{};
\draw[fill] (4,1) circle [radius=0.07];
\draw (6,2) --(7,1) node[at end,left] {$w$} ;
\draw (7,1) --(7,0) node[midway,left]{};
\draw (8,2) --(7,1) node[midway,right] {};
\draw (7,2) --(7,1) node[at start,above]{};
\draw[fill] (7,1) circle [radius=0.07];

\draw (9.3,3) --(10,2) node[at end,left] {$v$};
\draw (10,3) --(10,2) node[at start,above] {};
\draw (10.6,3) --(10,2) node[midway,right] {};
\draw (12,2) --(11,1) node[midway,left] {};
\draw (10,2) --(11,1) node[at end,left] {$u$};
\draw (11,1) --(11,0) node[midway,left] {};
\draw (11.3,3) --(12,2) node[at end,left] {$w$};
\draw (12,3) --(12,2) node[at start,above] {};
\draw (12.6,3) --(12,2) node[midway,right] {};
\draw[fill] (10,2) circle [radius=0.07];
\draw[fill] (11,1) circle [radius=0.07];
\draw[fill] (12,2) circle [radius=0.07];
\draw[thick,->] (8.5,1.5) -- (9,1.5) node[anchor=north west] {};
\draw (2.5,1) circle [radius=0.1];

\end{tikzpicture}
\]
It is up to the operadic structure to ``simplify'' the complicated tree in the right-hand side of the picture to a
6-corolla with a single label at the unique vertex. The simplification can be thought of as something induced
(contravariantly) by a morphism from the 6-corolla to the complicated tree with three vertices and 9 edges.
(In this context it is convenient to think of trees as partially ordered sets of edges. A morphism of trees is
given by an order preserving map of edge sets, subject to additional conditions which will be
spelled out below.)

\smallskip
For every symmetric monoidal category $\textsf{C}$ it makes sense to speak of operads enriched over $\textsf{C}$. These still have a (discrete) set of objects.
A \emph{topological operad} is an operad enriched over the category of compactly generated weak Hausdorff spaces. We will more ambiguously speak
of operads enriched over spaces to mean either topological operads or operads enriched over simplicial sets. The category of simplicially enriched operads
will be denoted $\sSetOp$. For later use we say that a morphism between monochromatic topological
operads is a \emph{weak equivalence} if it is a levelwise weak homotopy equivalence.
\\
To compare the theories of topological operads and simplicially enriched operads we use a fact similar to
\cite[Cor. 1.14]{BM13} that a Quillen equivalence
$\textsf{V} \rightarrow \textsf{V'}$ between suitably nice symmetric monoidal model categories induces a Quillen equivalence
$\textsf{V-Op} \rightarrow \textsf{V'-Op}$ between the model structures on
enriched operads. A (symmetric) monoidal category $\textsf{C}$ equipped with a model structure is called a \emph{(symmetric) monoidal model category} if it
satisfies the following two axioms.
\begin{itemize}
 \item For every pair of cofibrations $f \colon X \rightarrow Y$, $f' \colon X' \rightarrow Y'$ the map
 \[
  (X \otimes Y') \coprod_{(X \otimes X')} (Y \otimes X') \rightarrow Y \otimes Y'
 \]
is a cofibration. It is a weak equivalence if $f$ or $f'$ is.
\item For every cofibrant $X$ the morphism
\[
 QI \otimes X \rightarrow I \otimes X \rightarrow X
\]
is a weak equivalence. Here $QI \rightarrow I$ denotes a cofibrant replacement of the tensor unit $I$.
\end{itemize}

These axioms are called the \emph{pushout-product axiom} and the \emph{unit axiom}, respectively. Examples include the usual model categories of simplicial
sets, compactly generated weak Hausdorff spaces and chain complexes.
\begin{exam}
 Let $(\textsf{C},\otimes)$ be a closed symmetric monoidal category (a monoidal category is \emph{closed} if the tensor product has a right adjoint,
 the \emph{internal Hom}) and $X$ an object of $\textsf{C}$.
 The \emph{endomorphism operad} $\textup{End}(X)$ is the operad enriched in $\textsf{C}$ on one object with morphism objects
\[
\textup{End}(X)(n) = \underline{\Hom}_{\textsf{C}}(X^{\otimes n}, X)
\]
and the obvious multicomposition by insertion. The functor $\underline{\Hom}$ denotes the internal $\Hom$-functor of $\textsf{C}$.
\end{exam}

Endomorphism operads give a way for other operads to act on objects of $\textsf{C}$. In this way operads classify additional algebraic structures.

\begin{defi}
 An \emph{algebra} $A$ over a monochromatic $\textsf{C}$-operad $P$ is an object $A$ of $\textsf{C}$ together with a map of operads
$P \to \textup{End}(A)$.
\end{defi}

\begin{exam}
 Let $\operatorname{Com}$ be the terminal topological operad. It has a single object and every mapping space
 is a point. Let $X$ be a topological space. Then any map $f$ from
$\operatorname{Com}$ to $\operatorname{End}(X)$
turns $X$ into an abelian topological monoid with operation
$f(\ast_2) \in \Map(X \times X,X)$.
\end{exam}

We will now describe the operads central to this work. The little disk operads have been studied in great detail. In \cite{May72} May proved his famous
recognition principle that a connected space is an algebra over the little $n$-disk operad if and
only if it is weakly equivalent to an $n$-fold loop space. A more precise statement will be given after we defined these operads.

\begin{exam}[The little disks operad] \label{littlediskdef}
 Let $D_n(k)$ denote the topological space of disjoint, rectilinear (i.e. respecting parallel lines) embeddings $\coprod_kI^n \rightarrow I^n$. Composition
of (multi-)morphisms is given by identifying the image of one morphism with a $I^n$ in the domain of the next one.
The following image shows a composition map $D_2(1) \times D_2(2) \rightarrow D_2(2)$:
\[
\begin{tikzpicture}
\draw (0,0) -- (3,0) -- (3,3) -- (0,3) -- (0,0);
\draw (4.25,0.25) -- (5.25,0.25) -- (5.25,1.25) -- (4.25,1.25) -- (4.25,0.25);
\draw (6.25,2.25) -- (6.75,2.25) -- (6.75,2.75) -- (6.25,2.75) -- (6.25,2.25);
\draw (4,0) -- (7,0) -- (7,3) -- (4,3) -- (4,0);
\draw (1,1) -- (2.5,1) -- (2.5,2.5) -- (1,2.5) -- (1,1);
\draw (9,0) -- (12,0) -- (12,3) -- (9,3) -- (9,0);
\draw[dashed] (10,1) -- (11.5,1) -- (11.5,2.5) -- (10,2.5) -- (10,1);
\draw (10.25,1.25) -- (10.6,1.25) -- (10.6,1.6) -- (10.25,1.6) -- (10.25,1.25);
\draw (11,2) -- (11.25,2) -- (11.25,2.25) -- (11,2.25) -- (11,2);
\draw[thick] (3.5,1.5) circle (0.1cm);
\draw[thick,->] (7.5,1.5) -- (8.5,1.5) node[anchor=north west] {};
\end{tikzpicture}
\]
Any topological operad weakly equivalent to the operad of litte $n$-disks is called an $E_n$-operad.
\end{exam}
We describe another model of topological $E_n$-operads called Fulton-MacPherson operads.
This one is less intuitive but has properties more closely related
to objects we will investigate later on. It is built from a sequence of compactified euclidean configuration spaces. This construction
is due to Fulton-MacPherson \cite{FM94} as an algebraic compactification of complex varieties
and was later built in a topological way
by Axelrod-Singer and Sinha \cite{AS94,Sin04}.

\begin{exam}[The Fulton-MacPherson operad] \cite{Sin04,GeJo} \label{FMop}
 For every $n$ and $k$ the subgroup $G_{n}$ of $\operatorname{Aff}(n)$, the group of affine automorphisms, generated by translations and scalar multiplication,
 of $\RR^n$ acts freely on the ordered configuration space $\Conf(k,\RR^n)$. The quotient
 $C[k,n]$ is a manifold of dimension $n(k-1)-1$ with an induced $\Sigma_k$-action.
 Consider the collection (or symmetric sequence) $\mathsf F_n(k)$ given by these manifolds for $k \geqslant 2$ and set
 $\mathsf F_n(0) = \mathsf F_n(1) = \emptyset$. The \emph{Fulton-MacPherson $E_n$-operad} $FM_n$ has the same underlying set as the
 free operad $\Free(\mathsf F_n)$ together with a point in degree zero. (The definition of symmetric sequences and the free operad construction will be
 given in \ref{freeoperaddefi}.)
 Its topology is constructed in such a way that every level $FM_n(k)$ is a compact, connected manifold with corners. The interior of this manifold
 is $F_n(k)$, provided $k \geq 2$. The spaces $FM_n(0)$ and $FM_n(1)$ are one-point spaces.\\
 We will now give an explicit construction. For all $(i,j) \in \binom{k}{2}$ define the maps
 \begin{align*}
 a_{(i,j)} \colon \Conf(k,\RR^n) & \rightarrow S^{n-1} \\
   x & \mapsto \frac{x_i - x_j}{\lVert x_i - x_j \rVert}
 \end{align*}
 Furthermore for all $(i,j,k) \in \binom{k}{3}$ define the maps
 \begin{align*}
  b_{(i,j,k)} \colon \Conf(k,\RR^n) & \rightarrow [0,\infty] \\
    x & \mapsto \frac {\lVert x_i-x_j \rVert}{\lVert x_i-x_k \rVert}.
 \end{align*}
 The configuration space
 $\Conf(k,\RR^n)$ embeds into $\RR^{nk}\times (S^{n-1})^{\binom{k}{2}} \times [0,\infty]^{\binom{k}{3}}$ via
 \[
  x \mapsto (x, \prod_{i,j} a_{i,j}(x), \prod_{i,j,k}b_{i,j,k}(x)).
 \]
The closure of the image of this map shall be denoted $C_k[\RR^n]$. The action of $G_n$ on the configuration space extends to an action on
$C_k[\RR^n]$. The quotients of this action assemble to the operad $FM_n$, i.e. $FM_n(k) := C_k[\RR^n] / G_n$. These quotient spaces are compact manifolds
with corners. They have a natural
stratification we can use to understand the operad structure on the collection $FM_n$. \\
The stratification is indexed over the category $\Psi_k$ of rooted, labeled trees with $k$ leaves (non-root outer edges) and no vertices of valence
one or two. The set of leaves shall be labeled by the set $\{ 1, 2, \ldots, k \}$.
The morphisms in $\Psi_k$ are given by contraction of inner edges. So there is a map $S \rightarrow T$ if $S$ can be turned into $T$ by a sequence of
contractions of inner edges.
The stratum corresponding to a
tree $T$ with vertices $v_1, \ldots, v_l$ of valence $k_1, \ldots, k_l$ is homeomorphic to $\prod C[k_i-1,n]$. Its closure is the union of all the strata
indexed by trees mapping to $T$. In particular the interior of $FM_n(k)$ is diffeomorphic to $C[k,n]$. \\
We try to illustrate this with some examples. The stratifications of the first three spaces ($FM_n(0)$, $FM_n(1)$ and $FM_n(2)$) are trivial.
There is no non-corolla tree with fewer than three inputs and no vertex of valence one or two.
The first non-trivial stratification arises at level $3$. There are $4$ different trees in $\Psi_3$.
\[
\scalebox{0.8}{
 \begin{tikzpicture}

\draw (1,3) node[anchor=south]{1} --(2,2);
\draw (3,3) node[anchor=south]{2} --(2,2);
\draw (2,2) --(3,1);
\draw (4,2) node[anchor=south]{3} --(3,1);
\draw (3,1) --(3,0);

 \draw (5,3) node[anchor=south]{1} --(6,2);
\draw (7,3) node[anchor=south]{3} --(6,2);
\draw (6,2) --(7,1);
\draw (8,2) node[anchor=south]{2} --(7,1);
\draw (7,1) --(7,0);

\draw (9,3) node[anchor=south]{2} --(10,2);
\draw (11,3) node[anchor=south]{3} --(10,2);
\draw (10,2) --(11,1);
\draw (12,2) node[anchor=south]{1} --(11,1);
\draw (11,1) --(11,0);

\draw (13,2) node[anchor=south]{1} --(14,1);
\draw (14,2) node[anchor=south]{2} --(14,1);
\draw (15,2) node[anchor=south]{3} --(14,1);
\draw (14,1) --(14,0);
\end{tikzpicture}
}
\]
We see that there are $3$ strata homeomorphic to $S^{n-1} \times S^{n-1}$ and the corolla stratum corresponding to the interior of $FM_n(3)$.
(In the case $n = 1$ the configurations in $\mathbb{R}^n$ have a canonical ordering and by using this we obtain $FM_1(k) = \Sigma_k \times SP(k)$,
where $SP(k)$ is a polytope found by Stasheff long before the work of Fulton-MacPherson.)
The number of strata grows quickly with the level. There are already $26$ trees in $\Psi_4$. \\
This stratification is compatible with the operadic structure. So for example the composition
\[
FM_n(2) \times (FM_n(2) \times FM_n(2)) \rightarrow FM_n(4)
\]
is an embedding whose image is the union of the strata corresponding to trees of the shape
\[
 \begin{tikzpicture}
  \draw (1.5,3)  -- (2,2);
  \draw (2.5,3)  -- (2,2);
  \draw (2,2) -- (3,1);
  \draw (3,1) -- (3,0);
  \draw (4,2) -- (3,1);
  \draw (3.5,3)  -- (4,2);
  \draw (4.5,3)  -- (4,2);
 \end{tikzpicture}
\]
\end{exam}

\begin{thm}[May's recognition principle \cite{May72}\,]
Every $n$-fold loop space is an $E_n$-algebra in a canonical way. Conversely let $X$ be a group-like $E_n$-algebra.
Then there exists another space $Y$ and a zig-zag of weak homotopy equivalences
$X \leftarrow Z \rightarrow \Omega^n Y$ of $E_n$-algebras.
\end{thm}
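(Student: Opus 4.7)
The plan is to prove both directions by exploiting the little $n$-disks model of Example \ref{littlediskdef}. For the forward direction, given a pointed space $Y$, I construct an explicit action of $D_n$ on $\Omega^n Y = \Map_*((I^n, \partial I^n), (Y, *))$. For a configuration $c = (\phi_1, \ldots, \phi_k) \in D_n(k)$ and loops $f_1, \ldots, f_k \in \Omega^n Y$, define $\theta(c)(f_1, \ldots, f_k) : I^n \to Y$ by
\[
x \mapsto \begin{cases} f_i(\phi_i^{-1}(x)) & \text{if } x \in \phi_i(I^n), \\ * & \text{otherwise.} \end{cases}
\]
Disjointness of the images makes this well-defined, the fact that each $f_i$ sends $\partial I^n$ to $*$ ensures continuity across the inner boundaries, and the verifications of continuity in the configuration, $\Sigma_k$-equivariance, and compatibility with operadic multicomposition are then routine.

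For the converse I would follow May's two-sided bar-construction strategy. Let $C_n$ denote the monad on based spaces whose algebras are exactly $E_n$-algebras, so that the forward direction yields a monad map $\alpha : C_n \to \Omega^n \Sigma^n$. Given a $C_n$-algebra $X$, set $Y := B(\Sigma^n, C_n, X)$, the geometric realization of the simplicial space $[q] \mapsto \Sigma^n C_n^q X$. Using the auxiliary bar construction $B(C_n, C_n, X)$ I obtain a zig-zag of $E_n$-algebras
\[
X \xleftarrow{\;\simeq\;} B(C_n, C_n, X) \xrightarrow{B(\alpha, \id, \id)} B(\Omega^n \Sigma^n, C_n, X) \xrightarrow{\;\eta\;} \Omega^n B(\Sigma^n, C_n, X),
\]
in which the left map admits a contracting homotopy from extra degeneracies, and the right map is the $(\Sigma^n, \Omega^n)$-adjunction comparison, which is an equivalence because $\Omega^n$ commutes with realizations of sufficiently nice (proper, well-based) simplicial spaces.

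The hard part is showing the middle map is a weak equivalence precisely when $X$ is group-like. Via a realization-level spectral sequence argument this reduces to the \emph{group completion theorem} for $E_n$-spaces: the natural map $\alpha_X : C_n X \to \Omega^n \Sigma^n X$ is a group completion for every based $X$. Proving this is the real technical obstacle; it requires a detailed homological computation of the free $E_n$-algebra $C_n X$ and a comparison with the stable splittings of $\Omega^n \Sigma^n X$ due to Snaith. Once granted, the hypothesis that $\pi_0(X)$ is a group implies that $X$ itself is already group-complete, so the comparison $X \to \Omega^n Y$ is a weak equivalence and not merely a homological localization.
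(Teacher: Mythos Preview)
The paper does not supply a proof of this theorem at all: it is stated with a citation to \cite{May72} and used only as background motivation for the little disks operads. There is therefore nothing in the paper to compare your argument against.

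That said, your outline is essentially May's original proof, and it is correct at the level of detail you give. A couple of remarks on where the real work hides. First, the claim that the comparison map $\eta\colon B(\Omega^n\Sigma^n,C_n,X)\to\Omega^n B(\Sigma^n,C_n,X)$ is a weak equivalence is May's Theorem~12.3, and it requires that the simplicial space in question be \emph{strictly proper}; this is not automatic and May devotes an appendix to verifying it. Second, your reduction of the middle map to the approximation/group completion theorem is right in spirit, but the logical flow in May is slightly different: for connected $X$ the approximation theorem gives directly that $\alpha_X\colon C_nX\to\Omega^n\Sigma^nX$ is a weak equivalence, and the group-like (non-connected) case is handled by a separate argument comparing path components. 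The full group completion statement you cite (with the Snaith splittings and the homology calculation of $C_nX$) came somewhat later, in work of Cohen and Segal, and is stronger than what is strictly needed here. Finally, note that the statement asks for a two-step zig-zag $X\leftarrow Z\rightarrow\Omega^nY$, whereas you produce a three-step one; this is harmless since the two rightward maps are both equivalences of $E_n$-algebras and can be composed.
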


The levelwise weak equivalences of simplicially enriched operads are the weak equivalences of a model structure on the category of monochromatic
operads.

\begin{thm} \emph{\cite[Thm 1.7]{CM11}} \label{sSetOpmodelstructure}
 The category $\sSetOp_*$ of monochromatic simplicially enriched operads carries a proper cofibrantly generated model structure
 such that the fibrations and weak equivalences are the levelwise fibrations and weak equivalences.
 \end{thm}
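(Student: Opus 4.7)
The plan is to construct the model structure by transferring the projective model structure on symmetric sequences of simplicial sets along the free-forgetful adjunction. Writing $\sSet^{\Sigma}$ for the category of sequences $\{M(n)\}_{n \geq 0}$ of simplicial sets with $\Sigma_n$-action on $M(n)$, equipped with the projective model structure (fibrations and weak equivalences checked levelwise, ignoring the $\Sigma$-action), we have the free-forgetful adjunction
\[
 \Free \colon \sSet^{\Sigma} \rightleftarrows \sSetOp_* \colon U,
\]
and the goal is to declare a morphism $f$ in $\sSetOp_*$ to be a fibration or a weak equivalence if and only if $U(f)$ is one in $\sSet^{\Sigma}$. The cofibrations are then forced by the left lifting property, and one takes generating (trivial) cofibrations of the form $\Free(i)$ with $i$ ranging over the generating (trivial) cofibrations of $\sSet^{\Sigma}$, i.e. maps built from $\Sigma_n \cdot (\partial \Delta[k] \hookrightarrow \Delta[k])$ and $\Sigma_n \cdot (\Lambda^r[k] \hookrightarrow \Delta[k])$.

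The first routine checks are bicompleteness of $\sSetOp_*$ (colimits are computed by reflexive coequalizers over $\Free$, limits are levelwise), smallness of the domains of the generating maps (everything is $\aleph_0$-presentable), and the fact that $U$ preserves filtered colimits. The heart of the argument is Kan's transfer criterion: I have to show that every relative $\Free(J)$-cell complex, where $J$ is the set of generating trivial cofibrations, becomes a levelwise trivial cofibration after applying $U$. I would establish this via Quillen's path object argument: cotensoring an operad levelwise with the simplicial set $\Delta[1]$ yields a functorial factorization $P \to P^{\Delta[1]} \to P \times P$ of the diagonal in which the first map is a levelwise weak equivalence and the second a levelwise Kan fibration. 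Because this path object exists, pushouts of generating trivial cofibrations are homotopy equivalences on underlying symmetric sequences, and the small object argument finishes the factorization axiom.

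The main obstacle is exactly this last step: analysing the underlying symmetric sequence of a pushout $P \cup_{\Free(A)} \Free(B)$ along a generating trivial cofibration. The free operad construction $\Free$ is described by a sum indexed by trees (see the construction recalled around \ref{freeoperaddefi}), so the underlying $\sSet^{\Sigma}$-object of such a pushout admits an explicit filtration whose successive layers are pushouts of $\Sigma_n$-equivariant maps built from $A \to B$ by tree-insertion. Checking that each layer is a levelwise trivial cofibration requires understanding the $\Sigma_n$-actions on trees carefully; the group actions are not free in general, but freeness holds in the relevant coordinates (permutations of leaves at a fixed vertex of the tree), which is what makes the argument work. This combinatorial filtration is the technically delicate ingredient, and once it is in place the rest is formal.

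Finally, properness. Right properness is immediate: limits and fibrations in $\sSetOp_*$ are computed at the level of symmetric sequences, and $\sSet$ with its Kan model structure is right proper. For left properness I would argue that every cofibration in $\sSetOp_*$ is, on underlying symmetric sequences, a retract of a transfinite composition of cofibrant cell attachments built from $\Free$, and the same filtration analysis as in the previous paragraph, combined with left properness of $\sSet$, shows that pushouts of levelwise weak equivalences along such attachments remain levelwise weak equivalences. Cofibrant generation is automatic from the construction of the generating sets $\Free(I)$ and $\Free(J)$.
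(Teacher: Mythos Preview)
The paper does not prove this theorem; it is quoted from \cite[Thm 1.7]{CM11}. The surrounding text (Definition~\ref{freeoperaddefi}, Remark~\ref{remainduced}, and the sentence immediately following it) makes clear that the intended argument is precisely the one you give: transfer the projective model structure on collections along the free--forgetful adjunction $\Free \dashv U$, using the standard path-object verification of the transfer hypothesis. Your proposal is correct and matches the approach the paper points to.
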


\begin{defi} \label{freeoperaddefi}
Let $\textsf{C}$ be a symmetric monoidal category. A \emph{collection} (also known as \emph{symmetric sequence}) in $\textsf{C}$ is a sequence of objects $X_n$ (where $n \geq 0$)
with actions of the
 symmetric group $\Sigma_n$. More formally the category of collections in $\textsf{C}$ is the product of functor categories
 \[
  \textsf{Coll}(\textsf{C}) := \prod_{n \in \mathbb{N}} \textsf{C}^{\Sigma_n}
 \]
where the groups are regarded as groupoids with one object. The forgetful functor 
taking a monochromatic $\textsf{C}$-operad to its underlying
collection has a left adjoint, called the \emph{free operad} functor. It is described at length in \cite[Chapter 5.8]{BM03}.
In each level $k$ a free operad is indexed by rooted trees with $k$ leaves.
Let $\mathbb{T}$ be the groupoid of finite rooted trees and isomorphisms.
More precisely $\mathbb T$ is the maximal subgroupoid of the dendrex category $\Omega$ defined in section~\ref{subsec-dendrex}.
Similarly let $\mathbb{T}_{\Lambda}$ be the groupoid of finite, rooted trees together with a total order $\lambda$ on their set of leaves.
For every collection $X$ we can define a functor
\[
 \underline{X} \colon \mathbb{T}^{\op} \rightarrow \textsf{C}
\]
by setting $\underline{X}(\eta) = I$, the tensor unit of $\textsf{C}$. Every tree $T \in \mathbb{T}$ can inductively be written as a grafting
$\corl_n \circ (T_1, \ldots, T_n)$. (The tree $\corl_n$ is the $n$-corolla, the tree with a single vertex of valence $n+1$.
These corollas will be introduced in \ref{deficorolla}.)
We set
\[
 \underline{X}(T) := X(n) \otimes \underline{X}(T_1) \otimes \ldots \otimes \underline{X}(T_n).
\]
The \emph{free operad} on a collection $X$ has the $n$-th space
\[
 \free(X)(n) \cong \coprod_{\substack{[(T,\lambda)] \in \pi_0 \mathbb T_{\Lambda} \\ T \text { has n inputs} }} \underline{X}(T) / \Aut(T,\lambda).
\]
Note that objects of $\mathbb T_{\Lambda}$ can have non-trivial automorphisms.
There is an understanding that we choose a representative $(T,\lambda)$ in each element of $\pi_0 \mathbb{T}_{\Lambda}$.
The action of $\Sigma_n$ on $\free(X)(n)$ comes from the action of $\Sigma_n$ on the total orderings of the leaves.
The permutation $\sigma \in \Sigma_n$
sends $(T,\lambda)$ to the chosen representative $(T',\lambda')$ in the class of $(T,\sigma(\lambda))$. We need to choose an
isomorphism from $(T,\sigma(\lambda))$
to the representative $(T',\lambda')$ in order to get an isomorphism from $\underline{X}(T)$ to $\underline{X}(T')$. Consequently that isomorphism
is well defined only modulo the action of $\Aut(T,\lambda)$ on $\underline{X}(T)$. ---
The operadic composition in a free operad is induced by grafting of trees in the obvious way.
\end{defi}

\begin{rema} \label{remainduced} \cite[Thm. 5.1]{GJ10}
 Let
 \[
  F \colon \textsf{C} \leftrightarrows \textsf{D} \colon G
 \]
be an adjunction of categories. Let $(\mathcal{C}o, \mathcal W, \mathcal Fi)$ be a cofibrantly generated model structure on $\textsf{C}$. A morphism
$f \colon a \rightarrow b$ in $\textsf{D}$ shall be called a fibration or weak equivalence if its image under $G$ is. If
\begin{itemize}
 \item $G$ preserves filtered colimits
 \item every morphism of $\textsf{D}$ with the left lifting property with respect to all fibrations is a weak equivalence
\end{itemize}
then there exists a cofibrantly generated model structure on $\textsf{D}$ with the above fibrations and weak equivalences.
Furthermore if $I$ is the set of generating cofibrations of $\textsf{C}$ and $J$ the set of generating trivial cofibrations then $F(I)$ and $F(J)$ are the
sets of generating cofibrations and trivial cofibrations, respectively, of $\textsf{D}$.
This model structure is called the (left) \emph{transferred model structure} along the adjunction $(F \dashv G)$.
\end{rema}

Since the model structure of \ref{sSetOpmodelstructure} is transferred from the category of collections we immediately see that any free operad on a cofibrant
collection is cofibrant.

A functorial cofibrant replacement of monochromatic topological operads has been constructed by Boardman and Vogt in \cite{BV73} and generalized
by Berger and Moerdijk to the
case of operads enriched in suitable model categories in \cite{BM06} and to the multi-object case in \cite{BM07}.
To avoid unnecessary complexity only the version for monochromatic topological operads will be presented here.

\begin{defi}[The Boardman-Vogt construction] \label{BVconst}
 Let $P$ be a topological operad. The \emph{Boardman-Vogt $W$ construction} is a factorization
 \[
  \operatorname{free}(P) \hookrightarrow WP \xrightarrow{\sim} P
 \]
of the counit $\operatorname{free}(P) \rightarrow P$ into a cofibration followed by a weak equivalence. The operad $WP$ itself is also often called the
Boardman-Vogt $W$-construction. Under a small hypothesis on $P$ the BV construction $WP$ is a functorial cofibrant replacement of $P$. \\
To build this factorization we start with the free operad $\operatorname{free}(P)$. Recall that its $n$-ary operations are the labellings of
certain trees with $n$ leaves. Each vertex in these trees (of valence $k+1$) is colored by an element of $P(k)$. To get $WP$ we furthermore equip the
internal edges with a length $l_e \in [0,1]$. If some edge $e$ has length $0$ then this point in $WP$ is identified with the one given by contracting the
edge and composing the two adjacent operations.
\end{defi}

\begin{lem} \emph{\cite{BV73,BM06}}
 If the underlying collection of $P$ is $\Sigma$-cofibrant
 \emph{(every space of the collection is cofibrant and the action of $\Sigma_k$ on the $k$-th space of the collection is free for all $k$)}, then the operad $WP$ is cofibrant.
\end{lem}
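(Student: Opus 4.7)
The plan is to exhibit $\free(P) \hookrightarrow WP$ as a transfinite composition of cofibrations of operads, and then invoke Remark \ref{remainduced} (transferred model structure on monochromatic operads from the category of collections): it will suffice to express each stage as a pushout of operads along $\free$ applied to a cofibration of the underlying collection. Combined with the fact that $\free(P)$ is itself cofibrant when $P$ is $\Sigma$-cofibrant (as noted after Theorem \ref{sSetOpmodelstructure}), this will yield cofibrancy of $WP$.

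First I would filter $WP$ by the number of internal edges of the decorated trees representing its points, setting $W^{(0)}P = \free(P)$ and letting $W^{(k)}P$ be the suboperad generated by trees with at most $k$ internal edges (the BV relation of contracting length-zero edges makes this filtration well-defined). For each isomorphism class of tree $T$ with exactly $k$ internal edges, the coend description of Definition \ref{BVconst} assembles the corresponding cells into an $\Aut(T)$-equivariant piece of the form $[0,1]^k \otimes \bigotimes_{v \in V(T)} P(|v|)$; the faces of $\partial[0,1]^k$ where some length equals $0$ are sent into $W^{(k-1)}P$ via edge contraction and operadic composition in $P$, while the faces where a length equals $1$ carry no identifications. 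I then want to identify the step $W^{(k-1)}P \hookrightarrow W^{(k)}P$ with the pushout of operads along $\free$ applied to the coproduct over these isomorphism classes of the collection-level maps
\[
 \partial[0,1]^k \otimes_{\Aut(T)} \bigotimes_{v \in V(T)} P(|v|) \;\hookrightarrow\; [0,1]^k \otimes_{\Aut(T)} \bigotimes_{v \in V(T)} P(|v|),
\]
placed in the appropriate arity of the collection.

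The main obstacle will be showing that each of these collection-level maps is indeed a cofibration; this is precisely where $\Sigma$-cofibrancy of $P$ enters. The group $\Aut(T)$ acts both on the cube $[0,1]^k$ (by permuting internal edges within symmetry orbits) and on $\bigotimes_v P(|v|)$ (through the $\Sigma_{|v|}$-actions at each vertex and by permuting sibling subtrees). Using that the action of $\Sigma_{|v|}$ on each $P(|v|)$ is free on a cofibrant object, one argues orbit by orbit that $\bigotimes_v P(|v|)$ is a cofibrant free $\Aut(T)$-object on the relevant strata, so that taking $\Aut(T)$-orbits of the pushout-product cofibration $\partial[0,1]^k \hookrightarrow [0,1]^k$ with this object preserves cofibrancy. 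Once this is established, the transfinite composition of the pushouts $W^{(k-1)}P \hookrightarrow W^{(k)}P$ gives a cofibration $\free(P) \hookrightarrow WP$, and composing with the cofibration $\varnothing \hookrightarrow \free(P)$ finishes the proof.
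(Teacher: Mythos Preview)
The paper does not supply its own proof of this lemma; it is stated with citations to \cite{BV73,BM06}, so there is no in-paper argument to compare against. Your overall strategy is essentially the one carried out in \cite{BM06}: filter $WP$ starting from $\free(P)$, exhibit each stage as a pushout in operads along $\free$ of a cofibration of collections, and invoke $\Sigma$-cofibrancy of $P$ to check that the collection-level maps are cofibrations.

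One slip is worth flagging. You write that the faces of $[0,1]^k$ where a length equals $1$ ``carry no identifications'', yet you then (correctly) attach along the full boundary $\partial[0,1]^k$. In fact the length-$1$ faces \emph{do} land in $W^{(k-1)}P$: operadic composition in $WP$ grafts trees and assigns the new internal edge length $1$, so a $k$-edge tree with some edge of length $1$ is the composite of the two pieces obtained by cutting there, each of which has strictly fewer internal edges and hence already lies in $W^{(k-1)}P$. Without this observation the attaching map on all of $\partial[0,1]^k$ is not defined, and the pushout you write down would not model the inclusion $W^{(k-1)}P \hookrightarrow W^{(k)}P$. Beyond this correction, the identification of each stage-wise inclusion with a pushout along $\free$ of the displayed map of collections is the genuinely delicate step (this is the content of the cellular analysis in \cite{BM06}); your sketch points in the right direction, but that is where the real work lies.
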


\subsection{Dendroidal sets and spaces} \label{subsec-dendrex}
The concept of dendroidal set was introduced by Ieke Moerdijk and Ittay Weiss in \cite{MW07} as
a generalization of simplicial sets suited to describe and investigate the homotopy theory of operads.
The homotopy theory of dendroidal sets and spaces was developed by Cisinski and Moerdijk in a series of papers \cite{CM09,CM11,CM13}.
Dendroidal sets correspond to (higher) operads in exactly the same way simplicial sets do to (higher) categories. Many constructions
for simplicial objects have dendroidal analogues. A major tool in this article will be the notion of dendroidal complete Segal spaces. \\
In this section we will give a short introduction to these notions and quote the most important results for our further work.

\begin{defi}
 A \emph{tree} (or \emph{dendrex}) $T$ consists of a tuple $(T,\leq,L)$ such that $(T,\leq)$ is a partially ordered finite set
 (the set of edges) with a unique minimal element (called the \emph{root}) and the
 property that for each element $x\in T$ the set of elements smaller than $x$ is linearly ordered.
 The set $L$ is a subset of the set of maximal elements of $T$. The elements of $T$ are called \emph{edges} and the elements of $L$ are
 called \emph{leaves}. An edge is \emph{inner} if it is neither a leaf nor the root. For any edge $x \in T \smin L$ the set $\inco(x)$ of elements $y > x$ such that
 there is no $z$ with $y > z > x$ is called the set of \emph{incoming edges} (or \emph{inputs}) of $x$. For any $x \in T \smin L$ the set
 $v_x :=\{x\} \cup \inco(x)$
 is a \emph{vertex} of $T$. (The set of vertices is in obvious bijection to $T\smin L$.)
 \end{defi}

These trees can be arranged into a category $\Omega$. To define the morphisms of $\Omega$ we note that every tree $T$
determines an operad $\Omega(T)$ whose set of objects is the set of edges of $T$. Every vertex $v_x$ contributes a generating
operation whose input set is the set of incoming edges $\inco(x)$ and whose
output is $x$. For example the
operad generated by
\[
\begin{tikzpicture}
\draw (0,4) --(1,3) node[midway, left] {a};
\draw (1,3) --(2,2) node[midway, left] {b};
\draw (2,4) --(1,3) node[midway, right] {c};
\draw (2,2) --(3,1) node[midway, left] {d};
\draw (4,2) --(3,1) node[midway, right] {e};
\draw (3,1) --(3,0) node[midway, left] {f};
\draw[fill] (2,2) circle [radius=0.1] node[left] {$v_d$};
\draw[fill] (1,3) circle [radius=0.1] node[left] {$v_b$};
\draw[fill] (3,1) circle [radius=0.1] node[left] {$v_f$};
\end{tikzpicture}
\]
has $6$ objects, morphisms $v_d \in \Omega(T)(b;d)$, $v_b \in \Omega(T)(a,c;b)$, $v_f \in \Omega(T)(d,e;f)$ and their compositions
$v_d \circ v_b \in \Omega(T)(a,c;d)$, $v_f \circ v_d \in \Omega(T)(b,e;f)$ and $v_f \circ v_d \circ v_b \in \Omega(T)(a,c,e;f)$.
The set of morphisms in $\Omega$ between two trees is defined to be set of morphisms between their corresponding operads.
\[
 \Hom_{\Omega}(T,T') := \Hom_{\textsf{Op}}(\Omega(T),\Omega(T')).
\]
Note that the morphisms do not have to preserve the root.

\begin{exam} \label{deficorolla}
 The trees with exactly one vertex are of particular importance and are called \emph{corollas}.
The following figure shows the $3$-corolla, the $1$-corolla and the $0$-corolla:
\[
\begin{tikzpicture}
\draw (0,2) --(1,1);
\draw (1,2) --(1,1);
\draw (2,2) --(1,1);
\draw (1,1) --(1,0);
\draw[fill] (1,1) circle [radius=0.1];
\draw (4,1) --(4,0);
\draw (4,2) --(4,1);
\draw[fill] (4,1) circle [radius=0.1];

\draw (6,1) --(6,0);
\draw[fill] (6,1) circle [radius=0.1];

\end{tikzpicture}
\]
\end{exam}

A presheaf on $\Omega$ is called a \emph{dendroidal set}. More generally for any
symmetric monoidal category $\textsf{C}$ the objects of $\Fun(\Omega^{\op}, \textsf{C})$ are called \emph{dendroidal objects} in $\textsf{C}$.
The category of dendroidal objects in $\textsf{C}$ will be denoted by $d\textsf{C}$. For every object $T$ in $\Omega$ there is the dendroidal
set represented by $T$; it is denoted by $\Omega[T]$.  \\
The simplex category $\Delta$ embeds into $\Omega$ as a full subcategory by sending $[n]$ to the linear tree with $n$ vertices and $n+1$ edges.
The operads $\Omega(T)$ for $T$ in the image of this embedding have no morphisms of higher degree and are thus equivalent to categories.
They are easily seen to be the linear categories $[n]$. There is a tree $\eta$ in $\Omega$ with exactly one edge; it is also
the image of $[0]$ in $\Delta$. Every operad which admits a morphism to $\Omega(\eta)$ cannot have higher morphisms
and thus $\textsf{Op}/\Omega(\eta) = \textsf{Cat}$ and $\Omega/\eta \cong \Delta$ and $\dSet/\Omega[\eta] = \sSet$. \\
Several constructions on the category of simplicial sets can be generalized to the dendroidal setting and recovered by
the description of $\sSet$ as the overcategory $\dSet/\Omega[\eta]$. One of the most important is the nerve construction. For an operad $P$ the \emph{dendroidal nerve}
$N_dP$ is the dendroidal set given by
\[
 N_dP(T) = \Hom_{\textsf{Op}}(\Omega(T),P).
\]
The nerve functor has a left adjoint $\tau_d$. It can be described as the unique colimit preserving functor that sends the represented
presheaf $\Omega[T]$ to the operad $\Omega(T)$. \\
For any category regarded as an operad
the dendroidal nerve reduces to the ordinary nerve of a category. Hence the following square of functors
\[
\begin{tikzcd}
 \textsf{Cat} \arrow[r] \arrow[d] & \sSet \arrow[d] \\
 \textsf{Op} \arrow[r] & \dSet
\end{tikzcd}
\]
commutes.

\begin{rema}
For every $T$ in $\Omega$ there is an isomorphism of dendroidal sets
\[ \Omega[T]\cong N_d\Omega(T).\]
\end{rema}

We will now examine the category
$\Omega$ more closely and describe the homotopy theory of dendroidal objects.

\begin{defi}
  Morphisms (in $\Omega$) of the following kind are called \emph{inner face maps}:
\[
\begin{tikzpicture}
\draw (0,2) --(1,1) node[midway,left] {a};
\draw (1,2) --(1,1) node[at start,above] {b};
\draw (2,2) --(1,1) node[midway,right] {c};
\draw (1,1) --(1,0) node[midway,left] {z};
\draw[fill] (1,1) circle [radius=0.1];
\draw [thick, ->] (4,1) -- (5,1) node[midway,above] {$\alpha$};
\draw (6,3) --(7,2) node[midway,left] {$\alpha(a)$};
\draw (7,3) --(7,2) node[midway,right] {$\alpha(b)$};
\draw (9,2) --(8,1) node[midway,right] {$\alpha(c)$};
\draw (7,2) --(8,1) node[midway,left] {y};
\draw (8,1) --(8,0) node[midway,left] {$\alpha(z)$};
\draw[fill] (7,2) circle [radius=0.1];
\draw[fill] (8,1) circle [radius=0.1];
\end{tikzpicture}
\]
They (contravariantly) correspond to operadic composition. Morphisms of the following kind
\[
\begin{tikzpicture}
\draw (0,2) --(1,1) node[midway,left] {a};
\draw (1,2) --(1,1) node[at start,above] {b};
\draw (2,2) --(1,1) node[midway,right] {c};
\draw (1,1) --(1,0) node[midway,left] {z};
\draw[fill] (1,1) circle [radius=0.1];
\draw [thick, ->] (4,1) -- (5,1) node[midway,above] {$\alpha$};
\draw (6,3) --(7,2) node[midway,left] {x};
\draw (8,3) --(7,2) node[midway,left] {y};
\draw (7,2) --(8,1) node[midway,left] {$\alpha(a)$};
\draw (8,2) --(8,1) node[at start,above] {$\alpha(b)$};
\draw (9,2) --(8,1) node[midway,right] {$\alpha(c)$};
\draw (8,1) --(8,0) node[midway,right] {$\alpha(z)$};
\draw[fill] (8,1) circle [radius=0.1];
\draw[fill] (7,2) circle [radius=0.1];
\end{tikzpicture}
\]
are called \emph{outer face maps}.
The \emph{degeneracies} are morphisms given by deleting an inner vertex of valence 2.
\[
\begin{tikzpicture}
\draw (0,4) --(1,3) node[midway,left] {a};
\draw (1,3) --(2,2) node[midway,left] {c};
\draw (2,4) --(1,3) node[midway,left] {b};
\draw (2,2) --(3,1) node[midway,left] {d};
\draw (4,2) --(3,1) node[midway,left] {e};
\draw (3,1) --(3,0) node[midway,left] {z};
\draw[fill] (2,2) circle [radius=0.1];
\draw[fill] (1,3) circle [radius=0.1];
\draw[fill] (3,1) circle [radius=0.1];
\draw [thick, ->] (5,2) -- (6,2) node[midway,above] {$\alpha$};
\draw (7,3) --(8,2) node[midway,left] {$\alpha(a)$};
\draw (9,3) --(8,2) node[midway,right] {$\alpha(b)$};
\draw (8,2) --(9,1) node[midway,left] {$\alpha(c) = \alpha(d)$};
\draw (10,2) --(9,1) node[midway,right] {$\alpha(e)$};
\draw (9,1) --(9,0) node[midway,right] {$\alpha(z)$};
\draw[fill] (8,2) circle [radius=0.1];
\draw[fill] (9,1) circle [radius=0.1];
\end{tikzpicture}
\]
On the operads associated to these trees this induces the map identifying the two adjacent objects and sending
the unary morphism between them to the identity on the new object.
By \cite[Lemma 3.1]{MW07} every morphism in $\Omega$ factors up to isomorphism as a composition of degeneracies followed by a sequence of face maps.
\end{defi}

The theory of \emph{Segal spaces} was developed by Rezk in \cite{Rez01}.
These Segal spaces are simplicial spaces behaving like an up-to-homotopy version of the nerve of a topological category. \\
Its dendroidal generalization was constructed in \cite{CM13}. This model has the merit of being less rigid than enriched operads
in a sense that their composition
law is only defined up to a contractible choice. \\
More exactly our model will be based on simplicial dendroidal sets.
As a category of simplicial presheaves it is canonically tensored, cotensored and enriched
over simplicial sets. The tensoring is given by taking a dendrexwise product of simplicial sets.

\begin{defi}
 Let $X$ and $Y$ be dendroidal spaces. A morphism $f \colon X \rightarrow Y$ is called a \emph{weak equivalence} if
 for every tree $T$ in $\Omega$ the map $X_T \rightarrow Y_T$ is a weak equivalences of simplicial sets.
\end{defi}

There are three standard choices for the classes of fibrations and cofibrations on the category of simplicial dendroidal sets if we fix the
class of dendrexwise weak equivalences as our choice for the weak equivalences. The \emph{projective} model structure is uniquely determined
by defining a morphism to be a fibration if and only if it is a dendrexwise Kan fibration. Dually the \emph{injective} model structure
is uniquely determined by the choice of dendrexwise cofibrations as its class of cofibrations. \\
There is an intermediate model structure taking into account the Reedy structure of $\Omega$.
Theorem \ref{dSpaceReedy} describes this in more detail. This model structure is a
central starting point in \cite{CM13}. Since we are flexible in our choice of model structure we will not need to use this result.

\begin{thm} \emph{\cite[Prop. 5.2]{CM11}} \label{dSpaceReedy}
 The category $\sdSet$ of simplicial dendroidal sets can be equipped with a
 generalized Reedy model structure using the Reedy structure of $\Omega$.
 It is cofibrantly generated and proper. The weak equivalences
 are the dendrexwise simplicial weak equivalences.
 A map of simplicial dendroidal sets $X \rightarrow Y$ is a
 fibration, resp. trivial fibration,
 if the relative matching maps
 \[
  X^{\Omega[T]} \rightarrow X^{\partial \Omega[T]} \times_{Y^{\partial \Omega[T]}} Y^{\Omega[T]}
 \]
are fibrations, resp. trivial fibrations, for all $T$. \emph{(See \cite[\S2.1]{CM11} for the meaning of $\partial\Omega[T]$.)}
 \end{thm}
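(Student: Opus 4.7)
The plan is to deduce this as an instance of the general machinery of generalized Reedy categories developed by Berger--Moerdijk, applied to the case of presheaves of simplicial sets on $\Omega$. The bulk of the work lies in verifying that $\Omega$ admits a generalized Reedy structure; once this is done, the model structure, its properties, and the characterization of fibrations all follow from the general theorem.

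First I would recall the generalized Reedy axioms: one needs a degree function $d \colon \operatorname{Ob} \Omega \to \mathbb{N}$, together with two wide subcategories $\Omega^+$ and $\Omega^-$ containing all isomorphisms, such that every non-invertible morphism in $\Omega^+$ strictly raises degree, every non-invertible morphism in $\Omega^-$ strictly lowers it, every morphism factors (essentially uniquely up to isomorphism) as a $\Omega^-$-map followed by a $\Omega^+$-map, and such that $\Omega^-$ and $\Omega^+$ interact compatibly with the isomorphism groupoid. I would take $d(T)$ to be the number of vertices of $T$, take $\Omega^+$ to be the subcategory generated by inner and outer face maps (together with isomorphisms), and $\Omega^-$ to be the subcategory generated by degeneracies (together with isomorphisms). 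The decomposition statement \cite[Lemma 3.1]{MW07}, quoted earlier in the chapter as the fact that every morphism factors up to isomorphism as a sequence of degeneracies followed by a sequence of face maps, supplies the factorization axiom. The remaining axioms (that $\Omega^-$-maps raise and $\Omega^+$-maps lower the number of automorphisms appropriately, etc.) can be checked by inspection on face maps and degeneracies of trees.

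Next, I would appeal to the Berger--Moerdijk theorem on Reedy model structures for presheaves valued in a cofibrantly generated model category: given a generalized Reedy category $R$ and a cofibrantly generated model category $\mathcal V$, the functor category $\Fun(R^{op}, \mathcal V)$ inherits a cofibrantly generated model structure in which weak equivalences are objectwise, and fibrations, resp.\ trivial fibrations, are characterized by the relative matching maps
\[
X^{\Omega[T]} \to X^{\partial \Omega[T]} \times_{Y^{\partial \Omega[T]}} Y^{\Omega[T]}
\]
being fibrations, resp.\ trivial fibrations, in $\mathcal V$ for every $T$. Specializing to $R = \Omega$ and $\mathcal V = \sSet$ with the Kan--Quillen model structure gives the desired model structure on $\sdSet$, with the stated description of fibrations. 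Properness is inherited from $\sSet$ because both pullback along fibrations and pushout along cofibrations are computed objectwise in a presheaf category, and weak equivalences are objectwise.

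The main obstacle in this plan is the precise verification that $\Omega$ is a generalized Reedy category, since the trees have nontrivial automorphisms and the interaction of these automorphisms with face and degeneracy maps has to be tracked carefully; this is where the strength of the generalized (as opposed to strict) Reedy notion is essential. Once the Reedy structure is in place, cofibrant generation, the matching map characterization, and properness are formal consequences of the Berger--Moerdijk theorem, and there is nothing further to check.
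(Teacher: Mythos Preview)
The paper does not give its own proof of this statement: it is quoted directly from \cite[Prop.~5.2]{CM11} and no argument follows the theorem. Your sketch is essentially the argument behind that cited result---$\Omega$ is shown to be a dualizable generalized Reedy category in the sense of Berger--Moerdijk (this is \cite[Example 2.8]{CM11}, building on the factorization lemma of Moerdijk--Weiss), and the model structure is then an instance of the general Berger--Moerdijk existence theorem for presheaves on such categories with values in a cofibrantly generated model category. So your approach matches the one in the literature; there is nothing to compare against in the present paper.

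One small correction: in your recollection of the axioms you wrote that $\Omega^-$-maps raise and $\Omega^+$-maps lower degree, which is backwards relative to your own conventions a few lines earlier; and the phrase about maps ``raising or lowering the number of automorphisms'' is not one of the axioms---what is required is that isomorphisms act freely on the fibers of the factorization (the condition that if $\theta f = f$ for an isomorphism $\theta$ and $f \in \Omega^-$ then $\theta$ is an identity, and dually). These are easy fixes and do not affect the overall strategy.
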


\begin{defi} \label{SegalCoreDefi}
 Let $T \in \Omega$ be a tree. If $T$ has at least one vertex
 the \emph{spine} or \emph{Segal core} $\Sc[T]$ of $T$ is defined as a dendroidal subset of $\Omega[T]$
given by the union of all $\Omega[S]$ for subcorollas $S$ of $T$. (There is one subcorolla for each vertex of $T$.)
For the trivial tree $\eta$ without vertices
 we set $\Sc[\eta] = \Omega[\eta]$.
Note that we recover the definition of a spine of a simplex by applying this definition to linear trees.
\end{defi}

These Segal cores have a close connection to operads. Remember that a simplicial set $X$ is the nerve of a category if and only if all maps
\[
 X_n \rightarrow X_1 \times_{X_0}X_1\times_{X_0}  \ldots \times_{X_0} X_1
\]
induced by the spine inclusions are bijections.
The following lemma is the generalization of this fact to the dendroidal setting.

\begin{lem} \label{dendspine}
 A dendroidal set $X$ is the nerve of an operad if and only if the map
 \[
  \Hom_{\dSet}(\Omega[T],X) \rightarrow \Hom_{\dSet}(\Sc[T],X)
 \]
induced by the Segal core inclusion is a bijection for all trees $T$. \\
Similarly a dendroidal space $X$ is the nerve of a simplicially enriched operad if and only if the map of simplicial sets
 \[
  X^{\Omega[T]} \rightarrow X^{\Sc[T]}
 \]
is an isomorphism for all $T$.
\end{lem}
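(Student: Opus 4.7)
The plan is to leverage the fact that the Segal core $\Sc[T]$ is, by definition, the union of its subcorollas, hence a colimit in $\dSet$ of a diagram assembled from one copy of $\Omega[t_k]$ per vertex of valence $k+1$ and one copy of $\Omega[\eta]$ per edge of $T$, with the obvious gluing maps sending each edge into the corollas at its endpoints. Taking $\Hom(-,X)$ converts this colimit into a limit on the target side, so both sides of the Segal map can be unpacked very concretely.

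For the forward direction, I would assume $X = N_d P$ and apply the adjunction $\tau_d \dashv N_d$ together with the identification $\tau_d \Omega[T] = \Omega(T)$ to obtain $\Hom_{\dSet}(\Omega[T], X) = \Hom_{\textit{Op}}(\Omega(T), P)$. Since $\Omega(T)$ is the free operad with one generator at each vertex of $T$ whose inputs and output are labelled by the edges incident to that vertex, specifying an operad map $\Omega(T) \to P$ is the same as specifying, for every vertex, a multimorphism of the matching arity in $P$, subject to the constraint that operations sharing an edge agree on its labelling object. By the colimit description above, this is exactly the data of a morphism $\Sc[T] \to X$, so the Segal map is a bijection.

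For the converse, suppose the Segal map is a bijection for all $T$. I would construct an operad $P$ with set of objects $X_\eta$ and, for $x_1, \dots, x_k, y \in X_\eta$, multimorphism set $P(x_1, \dots, x_k; y)$ defined as the fiber of $X_{t_k} \to X_\eta^{k+1}$ over $(x_1, \dots, x_k, y)$, where the map is induced by the $k+1$ outer face maps $\eta \to t_k$. Identities come from the degeneracy $t_1 \to \eta$, the $\Sigma_k$-action from leaf-permuting automorphisms of $t_k$, and composition is defined by feeding two operations to be grafted through the inverse Segal bijection $X_{\Sc[T]} \xrightarrow{\cong} X_T$ (for the two-vertex grafting $T$) and then applying the inner face map $X_T \to X_{t_m}$ contracting the internal edge.

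The main obstacle will be verifying the operad axioms for this $P$ and then checking $X \cong N_d P$. Each axiom reduces to applying the Segal condition at a tree with more vertices: associativity is witnessed by a three-vertex grafting, where both bracketings of the composite correspond to contracting all internal edges of the same tree; equivariance follows from naturality under isomorphisms of $t_k$; and the unit axiom uses the simplicial identities relating the degeneracy $t_1 \to \eta$ to the inner face maps that contract edges adjacent to a unary vertex. Finally, $X \cong N_d P$ holds because at every $T$ the Segal hypothesis identifies $X_T \cong X_{\Sc[T]}$, and $X_{\Sc[T]}$ agrees with $(N_d P)_{\Sc[T]} \cong (N_d P)_T$ by the construction of $P$ together with the forward direction already established. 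The simplicially enriched version follows by carrying the same argument out levelwise, since the Segal condition for dendroidal spaces is a condition on simplicial sets at each $T$ and the mapping simplicial set $X^{\Omega[T]}$ has $n$-simplices given by dendroidal set maps $\Omega[T] \times \Delta[n] \to X$.
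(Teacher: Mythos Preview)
The paper does not actually prove this lemma; it is stated as background from the Moerdijk--Weiss and Cisinski--Moerdijk papers and left without argument. So there is no ``paper's own proof'' to compare against, and your proposal stands on its own.

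Your strategy is the standard one and is essentially correct. The forward direction is clean: the adjunction $\tau_d \dashv N_d$ plus the free-operad description of $\Omega(T)$ reduces $\Hom_{\dSet}(\Omega[T],N_dP)$ to exactly the vertex-by-vertex data that $\Hom_{\dSet}(\Sc[T],N_dP)$ records. For the converse, building $P$ from $X_\eta$ and the fibers of $X_{t_k}\to X_\eta^{k+1}$, and extracting composition from the inverse Segal bijection at two-vertex trees followed by the inner face, is the right construction; the axiom checks you list (associativity from three-vertex trees, unit from degeneracy identities, equivariance from corolla automorphisms) are precisely what is needed.

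One point deserves slightly more care than you give it. In the final step you produce, for each $T$, a bijection $X_T \cong (N_dP)_T$ by chaining $X_T \cong X_{\Sc[T]} = (N_dP)_{\Sc[T]} \cong (N_dP)_T$. To conclude $X \cong N_dP$ as dendroidal sets you must know these bijections are natural in $T \in \Omega$. This is not automatic from what you wrote: you should either exhibit a map of dendroidal sets $X \to N_dP$ (e.g.\ the one sending a $T$-dendrex to the operad map $\Omega(T)\to P$ determined by its image in $X_{\Sc[T]}$) and then check it is a levelwise bijection, or verify directly that your chain of bijections commutes with face, degeneracy and isomorphism maps. This is routine---everything in sight is built from the structure maps of $X$---but it is the place where a careless write-up can go wrong, so make the map explicit. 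The enriched statement then genuinely does follow levelwise, since an isomorphism of simplicial sets $X^{\Omega[T]} \to X^{\Sc[T]}$ gives the discrete Segal bijection in each simplicial degree and the resulting composition maps are automatically simplicial.
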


For dendroidal spaces to model topological operads we still want this equivalence to hold up to homotopy.
The resulting notion will extend the classical definition of a
complete Segal space as a model for $(\infty,1)$-categories.

\begin{defi} \label{dendsegaldefi}
 A dendroidal space $X$ is called a \emph{dendroidal Segal space} if for all trees $T$ the map
 \[
  X_T = \Hom(\Omega[T],X) = X^{\Omega[T]} \longrightarrow \RHom(\Sc[T],X)
 \]
is a weak equivalence of simplicial sets.
\end{defi}

\begin{rema}
  In \cite{CM13} Cisinski-Moerdijk define the \emph{model structure for dendroidal Segal spaces} as the left Bousfield localization
of the generalized Reedy structure on $\sdSet$ at the set of Segal core inclusions.
\end{rema}

(The definition \ref{dendsegaldefi} is not in full agreement with \cite{CM13} because Cisinski-Moerdijk write
$\Hom(\Sc[T],X)$ instead of $\RHom(\Sc[T],X)$ and insist that dendroidal Segal spaces
be Reedy-fibrant to make up for that. Namely, the Segal core $\Sc[T]$ is Reedy-cofibrant. Therefore $\Hom(\Sc[T],X)$ is weakly equivalent
to $\RHom(\Sc[T],X)$ if $X$ is Reedy fibrant.)

\begin{lem} \emph{\cite[Thm 7.8]{BW15}; \cite[Thm 4.3]{BHR19}}
 Let $P$ be a monochromatic simplicial operad. The dendroidal space $N_dP$
given by
\[
 (N_dP)_T := \underline{P}(T),
\]
using the notation of \emph{\ref{freeoperaddefi}}, satisfies the Segal property.
The assignment $P \mapsto N_dP$ is functorial and preserves all weak equivalences.
Moreover for any two operads $P$ and $Q$ the morphism
\[
 \RHom(P,Q) \rightarrow \RHom(N_dP,N_dQ)
\]
is a weak equivalence.
\end{lem}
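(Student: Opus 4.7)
The plan is to address the three assertions separately, treating the first two as direct calculations and deducing the third from the Quillen equivalence between simplicial operads and dendroidal Segal spaces proved in \cite{BW15}.

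\emph{The Segal property.} I would first observe that $(N_dP)_\eta = \underline{P}(\eta) = I = \ast$ and $(N_dP)_{t_n} = P(n)$, so by the inductive formula of \ref{freeoperaddefi} one has
\[
(N_dP)_T \; \cong \; \prod_{v \in V(T)} P(n_v),
\]
where $n_v+1$ is the valence of the vertex $v$. To compute $\RHom(\Sc[T], N_dP)$, I would write $\Sc[T]$ as the coequalizer $\coprod_{e \in E_{\mathrm{int}}(T)} \Omega[\eta] \rightrightarrows \coprod_{v \in V(T)} \Omega[t_{n_v}]$, with one copy of $\Omega[\eta]$ for each inner edge and the two maps recording the two adjacent corollas. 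Since this is a homotopy colimit of Reedy cofibrant representables, $\RHom(-, N_dP)$ converts it into a homotopy equalizer; because $(N_dP)_\eta = \ast$, this equalizer collapses to $\prod_v P(n_v)$, and a diagram chase matches the Segal map with the product identification, proving the Segal property.

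\emph{Functoriality and preservation of weak equivalences.} Both are immediate from the product description: functoriality in $P$ follows from the naturality of the formula $\underline{P}(T) = P(n) \otimes \underline{P}(T_1) \otimes \cdots \otimes \underline{P}(T_n)$, and since a finite product of weak equivalences of simplicial sets is a weak equivalence, any levelwise weak equivalence $P \to Q$ induces a dendrex-wise weak equivalence $N_dP \to N_dQ$.

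\emph{Equivalence of derived mapping spaces.} For the final claim I would invoke the Quillen equivalence of \cite{BW15} (closely related to \cite{CM13}) between the model category $\sSetOp_*$ of monochromatic simplicial operads and the Segal space model structure on a suitable subcategory of reduced dendroidal spaces. Granted this, standard derived-functor formalism yields
\[
\RHom_{\sSetOp}(P,Q) \; \simeq \; \RHom_{\sdSet}(\mathbb{L}N_dP, \mathbb{R}N_dQ) \; \simeq \; \RHom(N_dP, N_dQ),
\]
where I use that $N_d$ is homotopical (just proved) and that the derived unit $P \to \mathbb{R}\tau_d \mathbb{L} N_dP$ is a weak equivalence. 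The main obstacle is precisely this last fact: it asserts that every monochromatic simplicial operad can be reconstructed, up to homotopy, from its nerve viewed as a dendroidal Segal space. This rectification statement is the heart of \cite{BW15}, and I would simply quote it rather than reproduce the argument.
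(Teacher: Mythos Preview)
Your proposal is correct and in fact more detailed than what the paper itself offers: the paper does not prove this lemma at all but simply states it with the citation \cite[Thm 7.8]{BW15}. Your direct verification of the Segal property and of functoriality via the product formula $(N_dP)_T \cong \prod_{v} P(n_v)$ is exactly the intended elementary argument, and your handling of the third claim---reducing it to the rectification/Quillen equivalence established in \cite{BW15} and quoting that result rather than reproving it---is precisely the paper's own stance. There is no independent argument in the paper to compare against; both you and the author defer the substantive content to Boavida--Weiss.
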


\begin{rema} In both \cite{BW15} and \cite{BHR19}, this result is attributed to Cisinski and Moerdijk, but 
it is not stated exactly in this form by Cisinski and Moerdijk. A small adjustment is required and \cite{BHR19} explain this in detail. 
Throughout this article we will only use the statement for $1$-reduced operads.
This implies that $N_dP$ is complete (and Segal). In general $N_dP$ is not complete.
\end{rema}

\section{A tower of derived mapping spaces} \label{sec-tower}

\subsection{Construction of the tower} \label{subsec-towconst}

In section~\ref{sec-intro} we have introduced the notion of dendroidal Segal spaces as a model for the homotopy theory of topological operads. We want to
use this model to describe the derived mapping spaces between two topological operads. For any two objects $X$ and $Y$ in any model category $\textsf{C}$
this derived mapping
space can be defined as the space of maps $\Hom_\textsf{C}(X^{c},Y^f)$ from a cofibrant replacement of $X$ to a fibrant
replacement of $Y$. Although it is a slick definition, actual computations can be cumbersome
because these objects tend to be unwieldy. Moreover, although this definition inherently depends on the choice of a model structure, the homotopy
type of the derived mapping space only depends on the class of weak equivalences. There is another more general definition of a derived mapping space
due to the work \cite{DK80b} of Dwyer and Kan. For every category $\textsf{C}$ together with a subcategory $W$ of weak equivalences they define derived mapping
spaces in terms of zig-zags of morphisms. If $W$ happens to be the class of weak equivalences of a model structure on $\textsf{C}$, then both
definitions yield weakly equivalent derived mapping spaces. \\
We start this section with a general investigation of derived mapping spaces in categories of space-valued functors with levelwise weak equivalences
under the assumption that the indexing category $\textsf{C}$ can be written as a sequential colimit of full subcategories $\textsf{C}_i$. We prove a lemma that the
derived space of natural transformations in $\Fun(\textsf{C}^{\op},\sSet)$ can be recovered up to homotopy from the mapping spaces between the
restrictions of these functors to the subcategories $\textsf{C}_i$.

\begin{lem} \label{TowerConvergence}
Let $F$ and $G$ be
contravariant functors from $\textup{\textsf{C}}$ to $\sSet$. We call a natural transformation $F \rightarrow G$ a
weak equivalence if it is an objectwise weak equivalence of simplicial sets in the sense of Kan-Quillen. Let $U_i$ denote the restriction functor from
$\Fun(\textup{\textsf{C}}^{\op},sSet)$ to $\Fun(\textup{\textsf{C}}_i^{\op},\sSet)$. Then the natural morphism
\[
 \RHom(F,G) \rightarrow \holim_i \RHom(U_iF,U_iG)
\]
is a weak equivalence.
\end{lem}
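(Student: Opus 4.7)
The plan is to apply the free CW-functor approximation framework developed in the first half of this chapter. Let $P(F)$ denote the property that the natural comparison map $\RHom(F,G) \to \holim \RHom_\bullet(U_\bullet F, U_\bullet G)$ is a weak equivalence, for $G$ fixed but arbitrary. This property is manifestly homotopy invariant in $F$, so by the approximation principle it suffices to verify $P$ on representable functors and to check that $P$ is preserved under disjoint unions and under homotopy pushouts of functors already known to satisfy it.

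For disjoint unions one uses $\RHom(\coprod_\alpha F_\alpha, G) \simeq \prod_\alpha \RHom(F_\alpha, G)$ together with the analogous identity after applying each $U_i$, and the fact that $\holim$ commutes with products. For homotopy pushouts, $\RHom(-,G)$ converts homotopy pushouts of cofibrant functors into homotopy pullbacks of simplicial sets, each restriction $U_i$ preserves homotopy pushouts (being a left adjoint to right Kan extension along a full subcategory inclusion), and $\holim$ commutes with homotopy pullbacks. Combining these facts one sees that if $P$ holds at the three corners $F_0, F_1, F_2$ of a pushout diagram, it holds for the homotopy pushout.

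The base case is a Yoneda argument. For $F = \Hom_C(-, c)$ the derived Yoneda lemma gives $\RHom(F, G) \simeq G(c)$. Since $C$ is a sequential colimit of the full subcategories $C_i$, there exists $i_0$ with $c \in C_{i_0}$, and for all $i \geq i_0$ fullness identifies $U_iF$ with the representable $\Hom_{C_i}(-, c)$ on $C_i$. Hence $\RHom(U_iF, U_iG) \simeq (U_iG)(c) = G(c)$ for all such $i$, and the transition maps in the tower are identified with the identity of $G(c)$. The inclusion of the final segment $\{i \geq i_0\}^{op}$ into $\mathbb{N}^{op}$ is initial, so the homotopy limit of the full tower agrees with that of this eventually constant subtower, which is $G(c)$; the comparison map becomes the identity under these identifications, hence an equivalence.

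The main obstacle to keep in mind is that the approximation principle has to bridge from single cell-attachments, which are handled by the homotopy pushout step, to the transfinite sequential colimit that assembles a free CW-functor. This is exactly what the framework from the first half of the chapter is designed to absorb, and its invocation here rests on the same clean compatibilities between $\RHom(-,G)$, the restrictions $U_i$, and $\holim$ used in the two closure steps above. Once those compatibilities are in place, the three closure conditions together with the representable base case give the lemma.
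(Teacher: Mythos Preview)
Your proposal is correct and follows essentially the same approach as the paper: invoke the free CW/principle framework (Proposition~\ref{principle}) to reduce to verifying the property for representables, disjoint unions, and homotopy pushouts, then handle each case exactly as you describe via Yoneda, products commuting with $\holim$, and the cube argument. The paper spells out the homotopy-pushout step in slightly more detail (drawing the commutative cube explicitly) but the content is identical.
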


We will prove this lemma in two steps. First we show that every contravariant functor admits a weak equivalence from a
functor satisfying a cellularity property.
These \emph{free CW-functors} are a subclass of the CW-functors of Dror-Farjoun \cite[1.16]{DF}. We then prove the statement
\ref{TowerConvergence} for all free CW-functors $F$.

\begin{defi}
 Let $\textsf{C}$ be a category.
 A functor $F \colon \textsf{C}^{\op} \rightarrow \sSet$ is called a \emph{free CW-functor} if there is a sequence
 \[
 \emptyset = F_{-1} \subset F_0 \subset F_1 \subset \ldots \subset F_{i-1} \subset F_i \subset \ldots
 \]
 of subfunctors of $F$ such that the
 following properties are satisfied.
 \begin{enumerate}
  \item $F(x) = \colim_i F_i(x)$ for all objects $x$ of $\textsf{C}$.
  \item For all $i\ge 0$ there exists a pushout diagram
  \[
   \begin{tikzcd}
   K_i \times \partial \Delta[i] \arrow[r] \arrow[d] & F_{i-1} \arrow[d] \\
    K_i \times \Delta[i] \arrow[r] & F_i
   \end{tikzcd}
  \]
  where $K_i$ is a disjoint union of representable functors.
 \end{enumerate}
\end{defi}

\begin{exam}
 Let $\underline{G}$ be a group regarded as a category with one object. Then
free CW-functor $F$ is nothing but a simplicial set $F(*)$ with a free $G$-action.
The subfunctors $F_i$ of $F$ can be the skeletons of $F(*)$.
\end{exam}

\begin{lem}
 For every functor $G \colon \textup{\textsf{C}}^{\op} \rightarrow \sSet$
 there is a free CW-functor $F$ together with a natural equivalence $F \rightarrow G$.
\end{lem}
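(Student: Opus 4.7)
My plan is to construct $F$ by a cellular induction tailored to the definition of a free CW-functor, essentially running the small object argument with respect to the generating set $I=\{\Hom(-,c)\times\partial\Delta[i]\hookrightarrow\Hom(-,c)\times\Delta[i]\}_{c\in C,\,i\geq 0}$ of projective cofibrations. The one essential piece of abstract nonsense is the (enriched) Yoneda identity that, for a simplicial presheaf $H$, a natural transformation $\Hom(-,c)\times K\to H$ is the same data as a map of simplicial sets $K\to H(c)$; this lets me translate freely between external attaching data and internal lifting problems.

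I would begin with $F_{-1}=\emptyset$ and set $F_0=K_0=\coprod_{c\in C,\,x\in G(c)_0}\Hom(-,c)$, with $e_0\colon F_0\to G$ sending the summand indexed by $(c,x)$ to $x$ via Yoneda. Inductively, given $e_{i-1}\colon F_{i-1}\to G$, form the set $S_i$ of all commuting squares
\[
\begin{tikzcd}
\partial\Delta[i]\arrow[r,"\phi"]\arrow[d] & F_{i-1}(c)\arrow[d,"e_{i-1}(c)"]\\
\Delta[i]\arrow[r,"\psi"] & G(c)
\end{tikzcd}
\]
ranging over all $c\in C$. Put $K_i=\coprod_{(\phi,\psi)\in S_i}\Hom(-,c)$, let the attaching map $K_i\times\partial\Delta[i]\to F_{i-1}$ be the one induced by the collected $\phi$'s via Yoneda, and define $F_i$ by the pushout in condition (2) of the definition. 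The $\psi$'s combine with $e_{i-1}$ (they agree on $K_i\times\partial\Delta[i]$ by the commutativity of each square) to yield $e_i\colon F_i\to G$ extending $e_{i-1}$. Finally set $F=\colim_i F_i$ and let $e\colon F\to G$ be the colimit map.

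The cellular structure of $F$ is transparent from the construction, so the real content is to see that $e$ is a levelwise weak equivalence. I would prove the stronger statement that each $e(c)\colon F(c)\to G(c)$ is a trivial Kan fibration, by verifying the right lifting property against every boundary inclusion $\partial\Delta[n]\hookrightarrow\Delta[n]$. Given such a lifting problem, the top map $\partial\Delta[n]\to F(c)$ factors through $F_{n-1}(c)$ (see below); the resulting commutative square with $G(c)$ is by definition an element of $S_n$, so the corresponding $n$-cell attached at stage $n$ is tautologically the required filler.

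The one point that warrants care is the factorization claim that any $\partial\Delta[n]\to F(c)$ lands in $F_{n-1}(c)$. This reduces to the observation that the non-degenerate simplices added in passing from $F_{i-1}$ to $F_i$ are all of dimension exactly $i$, since the unique non-degenerate simplex of $\Delta[i]$ not in $\partial\Delta[i]$ is the top one and each $\Hom(-,c)$ is a discrete presheaf. Granting this, the non-degenerate simplices of $F(c)$ of dimension at most $n-1$ already sit in $F_{n-1}(c)$, and any map out of the finite complex $\partial\Delta[n]$ must therefore factor through $F_{n-1}(c)$. This is the main technical hinge of the argument; everything else is bookkeeping for the transfinite small object construction.
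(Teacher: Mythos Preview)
Your argument is correct, and the ``technical hinge'' you flag is fine: since each $K_i$ is a discrete presheaf, the pushout $F_i(c)=F_{i-1}(c)\cup_{K_i(c)\times\partial\Delta[i]}K_i(c)\times\Delta[i]$ adds only the (images of the) top simplices of the $\Delta[i]$'s as new nondegenerate simplices, so everything of dimension $<n$ in $F(c)$ already lies in $F_{n-1}(c)$ and the factorization follows.

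However, your route is genuinely different from the paper's. The paper does not run the small object argument; instead it builds an approximation $F^{(n)}\to G$ which is objectwise $(n-1)$-connected, and passes to $F^{(n+1)}$ by attaching, for each $c$ and each nontrivial $x\in\pi_n(G(c),F^{(n)}(c))$, a cell $\Hom(-,c)\times K_{c,x}$ along $\Hom(-,c)\times L_{c,x}$, where $(K_{c,x},L_{c,x})$ is a barycentric subdivision of $(\Delta[n],\partial\Delta[n])$ chosen so that $x$ has a \emph{simplicial} representative. This is the classical ``kill relative homotopy'' CW-approximation, transported to presheaves; one then has to observe separately that the result is a free CW-functor, since the intermediate filtration $F^{(n)}$ is not the skeletal one. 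Your construction builds the skeletal filtration directly and yields the stronger conclusion that $e$ is a levelwise trivial Kan fibration, at the cost of a vastly larger $F$ (you attach a cell for every lifting square, not just one per homotopy class). The paper's version is more parsimonious in cells and closer in spirit to classical obstruction theory; yours is cleaner bookkeeping and avoids the subdivision manoeuvre entirely.
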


\begin{proof} Fix some $n\ge 0$ and suppose, for an induction argument, that we have already constructed
a free CW-functor $D$ together with a natural transformation
 $u\colon D\rightarrow G$ such that
 for each $c$ in $\textsf{C}$ the morphism $u_c\co D(c) \rightarrow G(c)$
 is $(n-1)$-connected. We want to get rid of the relative homotopy groups
 $\pi_n(G(c),D(c))$ for all $c \in \textsf{C}$. (Strictly speaking we should write $\pi_n(Z(c),D(c))$ where $Z(c)$ is the mapping cylinder of
 $u_c$.) Let $x \in  \pi_n(G(c),D(c))$ be a non-trivial element of this homotopy group and let
 \[ v_{c,x} \colon (K_{c,x},L_{c,x}) \rightarrow G(c) \]
 denote a representative of $x$, where $(K_{c,x},L_{c,x})$ is a (possibly iterated) barycentric subdivision of the pair $(\Delta[n], \partial \Delta[n])$.
 Let $D_{c,x}$ be the pushout of
 \[
  \Hom(-,c) \times K_{c,x} \hookleftarrow \Hom(-,c) \times L_{c,x} \rightarrow F
 \]
(where the right-hand arrow extends, and is determined by, $v_{c,x}$ restricted to $L_{c,x}$).
Let $E$ be the union along the common subfunctor $D$ of the $D_{c,x}$ where $c$ ranges over all objects of $\textsf{C}$ and
$x$ ranges over all all non-trivial elements of the homotopy groups $\pi_n(G(c),E(c))$. The choices $v_{c,x}$ together with $u$ uniquely
define a new natural transformation $v\co E\rightarrow G$. By construction this specializes to an $n$-connected map
$E(c) \rightarrow G(c)$ for every $c$ in $\textsf{C}$. It remains to be shown that the functor $E$ is again a free CW-functor.
To do so we show that the pairs $(K_{c,x},L_{c,x})$ are pairs of cell complexes. Each non-degenerate simplex in $K_{c,x} \smin L_{c,x}$
contributes a free cell to $D_{c,x}$ which is not in $D$.
It follows that $D_{c,x}$ is free CW. Since different choices of $(c,x)$ lead to disjointly attached cells, the union $E$
is free CW as well (and what is more important, we have shown that it is free CW \emph{relative} to $D$).

The functor $F$ can now be constructed as the union (sequential colimit) of an increasing sequence
\[  F^{-1} \subset F^0 \subset F^1 \subset F^2 \subset \ldots \]
of functors $\textsf{C}\to \sSet$, each equipped with a morphism $w^n\co F^n\to G$, such that $w^n$ extends $w^{n-1}$.
Define $F^{-1}=\emptyset$ and define $F^{n}$ and $w^n$ inductively so that $F^n$ is to $F^{n-1}$ as $E$ is to $D$ above,
and $w^n$ is to $w^{n-1}$ as $v$ is to $u$. The union of the $w^n$
is a morphism $w\co F\to G$ and it is a weak equivalence by construction.
\end{proof}

\begin{prop} \label{principle}
 Suppose we have some property $\mathfrak{P}$ for contravariant functors
 from $\textsf{C}$ to $\sSet$. Assume the property $\mathfrak{P}$ is preserved under levelwise weak equivalence, disjoint unions over arbitrary indexing sets
 and homotopy pushouts and holds for all representable functors.
 Then $\mathfrak{P}$ holds for every contravariant functor $F$ from $\textsf{C}$ to $\sSet$.
\end{prop}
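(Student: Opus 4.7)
The plan is to invoke the preceding lemma to reduce to free CW-functors and then dismantle the cellular structure. Since $\mathfrak{P}$ is preserved under levelwise weak equivalences, it suffices to verify it on a free CW-approximation, so I may assume $F$ itself comes with a free CW-filtration $\emptyset = F_{-1} \subset F_0 \subset F_1 \subset \ldots$ as in the preceding definition. The verification breaks naturally into three stages: handle the building blocks appearing in the cell attachments, deduce $\mathfrak{P}$ for each finite skeleton $F_n$ by induction on $n$, and finally pass to $F = \colim_n F_n$.

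For the first stage, $\Hom(-,c)\times \Delta[i]$ is levelwise weakly equivalent to the representable $\Hom(-,c)$ via the projection $\Delta[i]\to \Delta[0]$, so it satisfies $\mathfrak{P}$. To treat the attaching domains $\Hom(-,c)\times \partial\Delta[i]$, I build $\partial\Delta[i]$ from its skeletal filtration, each stage being a pushout along an inclusion $\partial\Delta[j]\hookrightarrow\Delta[j]$ with $j<i$. Since $\Hom(-,c)\times(-)$ preserves colimits and sends monomorphisms to monomorphisms, these pushouts remain homotopy pushouts after multiplication with $\Hom(-,c)$. A straightforward induction on $i$ thus shows that $\Hom(-,c)\times K$ satisfies $\mathfrak{P}$ for every simplicial set $K$; taking disjoint unions extends this to $K_n\times\Delta[n]$ and $K_n\times\partial\Delta[n]$ whenever $K_n$ is a disjoint union of representables.

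In the second stage I induct on $n$. The base case $F_{-1}=\emptyset$ is the empty disjoint union and satisfies $\mathfrak{P}$. For the inductive step, the defining pushout
\[
\begin{tikzcd}
K_n \times \partial \Delta[n] \arrow[r] \arrow[d] & F_{n-1} \arrow[d] \\
K_n \times \Delta[n] \arrow[r] & F_n
\end{tikzcd}
\]
has a levelwise cofibration on the left and is therefore a homotopy pushout. All three input corners satisfy $\mathfrak{P}$ by the inductive hypothesis together with the first stage, so $F_n$ does too.

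Finally, to pass to the colimit, $F=\colim_n F_n$ along levelwise cofibrations is equivalent to the mapping telescope, which can be presented as a homotopy pushout of the form
\[
\coprod_n F_n \longleftarrow \coprod_n (F_n \sqcup F_n) \longrightarrow \coprod_n F_n \times \Delta[1],
\]
where the right map is the inclusion at the two endpoints $\{0,1\}\subset\Delta[1]$ and the left map sends the two copies of $F_n$ to the summands indexed by $n$ and $n+1$ via the identity and the structure map $F_n\to F_{n+1}$, respectively. Each corner is a disjoint union of functors satisfying $\mathfrak{P}$ (using that $F_n\times\Delta[1]$ is levelwise equivalent to $F_n$), so $F$ satisfies $\mathfrak{P}$ as well. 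The main subtlety I anticipate is simply bookkeeping: at each step I must check that the strict pushouts are genuine homotopy pushouts, which here always reduces to verifying that the relevant attaching maps are levelwise monomorphisms.
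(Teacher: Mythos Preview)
Your argument is correct and follows essentially the same three-part strategy as the paper: reduce to a free CW-functor, induct on skeleta using that the cell-attachment squares are levelwise homotopy pushouts, and then express the sequential colimit as a telescope written as a single homotopy pushout of coproducts of skeleta. The only cosmetic differences are that you treat $\Hom(-,c)\times\partial\Delta[i]$ by a separate preliminary induction (the paper folds this into the main skeleton induction), and for the colimit step you present the telescope as the pushout $\coprod_n F_n \leftarrow \coprod_n(F_n\sqcup F_n)\rightarrow \coprod_n F_n\times\Delta[1]$, whereas the paper uses Milnor's odd/even decomposition $L_1\leftarrow L_1\cap L_2\rightarrow L_2$; both are standard and equivalent.
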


\begin{proof}
 Without loss of generality we assume $F$ to be a free CW-functor.
 We will prove this in two steps. First we show by induction that all skeleta $F_i$ have the desired property and then deduce the statement
 for the homotopy colimit.
 The $0$-skeleton is just a disjoint union of representable
 functors and as such has the property $\mathfrak{P}$. To prove the induction step we have to show that $F_{i+1}$ is a
 homotopy pushout of functors satisfying $\mathfrak{P}$.
 Note that because of the homotopy invariance of $\mathfrak{P}$ every cell $\Hom(-,c) \times \Delta[i]$ has property $\mathfrak{P}$.
 Since we assumed the property to be preserved under disjoint union the coproduct $\coprod \Hom(-,c) \times \Delta[i]$ still has
 the property. To conclude the induction step we need to show our desired property for every functor of the form
 $\Hom(-,c) \times \partial \Delta[i]$. But this is already covered by the induction assumption because $\Hom(-,c) \times \partial \Delta[i]$
 is a free CW-functor built from cells of dimension $(i-1)$ and less. --- Next, we have to
 show that the pushout diagram
\[
   \begin{tikzcd}
   K_i \times \partial \Delta[i] \arrow[r] \arrow[d] & F_{i-1} \arrow[d] \\
    K_i \times \Delta[i] \arrow[r] & F_i
   \end{tikzcd}
\]
is actually a homotopy pushout square.
But the left hand vertical morphism is levelwise injective and
 so the pushout square is levelwise a homotopy pushout.
 \\
 So far we have shown the property $\mathfrak{P}$ for all $k$-skeleta $F_k$. We now show that the homotopy colimit $F$ can
 be written as a homotopy pushout. The argument has already been presented by Milnor in \cite{Mil}.
 Let
 \[
 tF := F_0 \times [0,1] \cup F_1 \times [1,2] \cup F_2 \times [2,3] \ldots
 \]
 understood as a subfunctor of $F \times [0,\infty)$. (The intervals can be taken as copies of $\Delta[1]$.) This construction is also
 known as the \emph{telescope} associated to the skeletal filtration of $F$.
 Note that the inclusion of $tF$ into $F \times [0,\infty)$ is a weak equivalence and thus $tF$ has property $\mathfrak{P}$
 if and only if $F$ does. We want to show that $tF$ decomposes as a homotopy pushout of functors with property $\mathfrak{P}$.
 To do so we define the subfunctors $L_1$ and $L_2$ of $tF$ by setting
 \[
  L_1 := F_0 \times [0,1] \cup F_2 \times [2,3] \cup \ldots
 \]
and
\[
  L_2 := F_1 \times [1,2] \cup F_3 \times [3,4] \cup \ldots
 \]
as the even and odd parts of $tF$, respectively. Their intersection $L_1 \cap L_2$ is the functor
\[
 L_1 \cap L_2 \cong F_0 \times \{ 1 \} \cup F_1 \times \{ 2 \} \cup \ldots
\]
We can write $tF$ as the pushout of $L_1\leftarrow L_1\cap L_2\to L_2$.
The functors $L_i$ and $L_1\cap L_2$ are all weakly equivalent to disjoint unions of skeleta $F_j$ und thus have property $\mathfrak{P}$
by the previous discussion. The pushout is also a homotopy pushout because both maps $L_1 \cap L_2 \rightarrow L_i$ are cofibrations.
\end{proof}

Using this principle we can prove our lemma \ref{TowerConvergence}.

\begin{proof}[Proof of \ref{TowerConvergence}]
 We need to verify the three assumptions of the previous lemma. We fix a levelwise fibrant functor $G \in \Fun(\textsf{C}^{\op},\sSet)$
 throughout this investigation.\\
First assume $F$ to be representable by some object $c$ and let $\textsf{C}_k$ be the first subcategory of the sequence $\textsf{C}_{\bullet}$ to contain $c$. Because we assumed
all subcategories $\textsf{C}_i$ to be full subcategories the
restriction of $F$ to $\textsf{C}_k$ is isomorphic to the functor on $\textsf{C}_k$ represented by $c$. The same holds for all $\textsf{C}_n$ with $n \geqslant k$.
It follows that
\[
\RHom(U_nF,U_nG) \simeq U_nG(c) = G(c)
\]
for all $n \geqslant k$. It is immediate that the homotopy limit of the tower of derived mapping spaces is weakly equivalent to $\RHom(F,G)$.\\
Now assume $F$ is a disjoint union of functors $F_i$ for which the tower converges. Since disjoint unions are certainly preserved under restrictions we have
$U_nF = \coprod U_nF_i$. It follows that
\[
 \RHom(U_nF,U_nG) = \prod \RHom(U_nF_i,U_nG)
\]
for all $n$. We get a commutative square
\[
 \begin{tikzcd}
  \RHom(F,G) \arrow[r] \arrow[d] & \prod \RHom(F_i,G) \arrow[d] \\
  \holim \RHom(U_nF,U_nG) \arrow[r]  & \holim \prod \RHom(U_nF_i,U_nG).
 \end{tikzcd}
\]
The two horizontal morphism are weak equivalences by assumption, the right hand vertical morphism is a weak equivalence because we can commute the
homotopy limit with the product. It follows that the morphism
\[
 \RHom(F,G) \rightarrow \holim \RHom(U_nF,U_nG)
\]
is a weak equivalence.
\\
For the last step assume $F$ is the homotopy pushout of $F_1 \leftarrow F_0 \rightarrow F_2$ and
the tower converges for all $F_i$. We can arrange the derived mapping spaces in a commutative cube.
\[
\begin{tikzcd}[row sep={40,between origins}, column sep={80,between origins}]
    & \RHom(F,G)  \ar{rr} \ar{dd} \ar{dl} & & \ar{dl} \ar{dd} \RHom(F_2,G) \\
   \RHom(F_1,G) \ar[crossing over]{rr} \ar{dd} & & \RHom(F_0,G) \\
      & \holim_n \RHom(U_nF,U_nG)  \ar{rr} \ar{dl} & &  \holim_n \RHom(U_nF_2,U_nG)  \ar{dl} \\
    \holim_n \RHom(U_nF_1,U_nG) \ar{rr} && \holim_n \RHom(U_nF_0,U_nG) \ar[from=uu,crossing over]
 \end{tikzcd}\]
Since the contravariant $\RHom$-functor turns homotopy pushouts into homotopy pullbacks the upper horizontal square is a
homotopy pullback. The lower horizontal square is a homotopy pullback because the truncation $U_n$ preserves homotopy pushouts (and hence $U_nF$ is the homotopy pushout of $U_nF_1 \leftarrow U_nF_0 \rightarrow U_nF_2$) and homotopy limits preserve homotopy pullbacks. We can thus regard this cube as a morphism between
homotopy pullback squares. This morphism induces a weak equivalence in three columns
\[ \RHom(F_i,G) \rightarrow \holim_n \RHom(U_nF_i,U_nG) \]
and thus in the fourth column as well.
\end{proof}

We want to apply this machinery to the setting of dendroidal spaces.
To do so we need to write the indexing category $\Omega$ of trees as an increasing union
of full subcategories
\[
\Omega\lg 0\rg \subset  \Omega\lg 1\rg \subset \Omega\lg 2\rg \subset \Omega\lg 3 \rg \ldots \subset \Omega.
\]
A natural choice for a filtration of $\Omega$ comes from the observation that every finite tree has a unique maximal valence among all its vertices.
We will thus filter $\Omega$ by the maximal valence of the vertices.

\begin{defi}
 For $n\ge 0$ let $\Omega\lg n\rg$ denote the full subcategory of $\Omega$ on trees without vertices of valence $n+2$ or higher.
 An \emph{$n$-truncated dendroidal space} is a contravariant functor from $\Omega\lg n\rg$ to the category $\sSet$ of simplicial sets. The restriction
 functor along the inclusion $\Omega\lg n\rg \hookrightarrow \Omega$ will be denoted $U_n$.
\end{defi}

The categories $\Omega\lg n\rg$ have the property that their direct limit is the entire category $\Omega$.
We can map the derived mapping space $\RHom(X,Y)$ between two dendroidal spaces to a tower:
 \[
\xymatrix@R=12pt@C=15pt{
 && \rule{12mm}{0mm}\vdots\rule{12mm}{0mm} \ar[d]  \\
 && \RHom(U_3X,U_3Y) \ar[d] \\
 && \RHom(U_2X,U_2Y) \ar[d] \\
\RHom(X,Y) \ar[rr] \ar@/^1.1pc/[urr] \ar@/^1.8pc/[uurr] \ar@/^2.6pc/[uuurr] && \RHom(U_1X,U_1Y)
}
\]
of derived mapping spaces between their truncations. The previous discussion implies the convergence of this tower.

\begin{cor} \label{corfilt}
 For every pair $X$ and $Y$ of dendroidal spaces this tower converges, i.e., the map
 $\RHom(X,Y) \rightarrow \holim_n \RHom(U_nX,U_nY)$
is a weak homotopy equivalence.
\end{cor}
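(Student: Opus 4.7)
The plan is to invoke Lemma \ref{TowerConvergence} directly, taking $C = \Omega$ and $C_i = \Omega_i$. Since that lemma already carries all the homotopical weight, the only job left is to check that the filtration $\Omega_0 \subset \Omega_1 \subset \ldots$ satisfies the hypotheses of the lemma, namely that it is a sequence of full subcategories whose directed union is $\Omega$.

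First I would note that fullness is built into the definition: the author defines $\Omega_n$ as the full subcategory of $\Omega$ on trees with no vertex of valence $\geq n+2$, so in particular every inclusion $\Omega_n \hookrightarrow \Omega_{n+1}$ and $\Omega_n \hookrightarrow \Omega$ is fully faithful. Next I would verify exhaustion: given any $T \in \Omega$, the underlying tree is finite, its vertex set is finite, and therefore the valences achieve a maximum $m$; hence $T$ lies in $\Omega_{m-1}$. This shows $\Omega = \bigcup_n \Omega_n$, and the identification $\Fun(\Omega^{op},\sSet) = \sdSet$ together with the fact that levelwise Kan--Quillen equivalences are exactly what we have called weak equivalences of dendroidal spaces lets us regard $X$ and $Y$ as instances of the setup of \ref{TowerConvergence}.

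With these verifications the conclusion is immediate from Lemma \ref{TowerConvergence}: the canonical map
\[
\RHom(X,Y) \longrightarrow \holim \RHom_{\bullet}(U_{\bullet}X,U_{\bullet}Y)
\]
is a weak equivalence. There is no substantive obstacle to overcome at this stage; the genuinely nontrivial work was performed in the proof of \ref{TowerConvergence}, which in turn reduces via Proposition \ref{principle} to free CW-functors and handles the representable, disjoint union, and homotopy pushout cases separately. In short, the proof of the corollary is nothing more than the verification that the filtration of $\Omega$ by maximal valence meets the formal hypotheses of the general lemma.
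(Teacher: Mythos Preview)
Your proposal is correct and matches the paper's own argument: the paper simply notes that every tree has a maximal vertex valence, so $\Omega = \bigcup_n \Omega_n$, and then states that the corollary follows from Lemma~\ref{TowerConvergence}. Your write-up is slightly more explicit about the fullness and exhaustion checks, but the approach is identical.
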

Of special interest will be the mapping space between the little disk operads introduced in \ref{littlediskdef}.
Here $X$ and $Y$ are dendroidal spaces weakly equivalent to nerves of operads of type $E_n$ and $E_m$ respectively.

\smallskip
We look for a description of the layers in the tower, i.e., the homotopy fibers of the forgetful map(s)
$\RHom(U_nX,U_nY)\to \RHom(U_{n-1}X,U_{n-1}Y)$.
There is such a description in the setting of \emph{$1$-reduced} dendroidal spaces. A dendroidal space $X$ is $1$-reduced if
$X(\eta)$, $X(\corl_0)$ and $X(\corl_1)$ are contractible spaces.
These correspond to monochromatic operads having contractible spaces in degrees $0$ and $1$.

\begin{defi}
Let $\Omega_{\cl}\subset \Omega$ be the full subcategory whose objects are the trees without any leaves, and
let $\iota\co \Omega_{\cl} \hookrightarrow \Omega$
be the inclusion functor. Objects of $\Omega_{\cl}$ will be called
\emph{closed trees}. We abbreviate $\odSet:=\Fun(\Omega_\cl,\Set)$ and $\sodSet:=\Fun(\Omega_\cl,\sSet)$.
Objects of these categories will be called \emph{closed dendroidal sets} and \emph{closed dendroidal spaces}, respectively. \\
The full subcategory $\Omega_{\cl}\cap\Omega\lg n\rg$ of $\Omega_{\cl}$ will be denoted $\Omega\lg n\rg_{\cl}$.
Its simplicial presheaves will be called \emph{$n$-truncated closed dendroidal spaces} and their category denoted $\sodSet\lg n\rg$.
\end{defi}

\begin{rema} Morphisms in $\Omega_{\cl}$ are much easier to understand than morphisms in $\Omega$. Recall that an object
of $\Omega_\cl$ is a finite partially ordered set $T$ (whose elements can be called edges)
subject to some conditions. There is no need to specify a set of leaves,
subset $L$ of $T$, because we are assuming that it is empty. A morphism from $T_0$ to $T_1$ in $\Omega_\cl$ was defined to be
a morphism of operads $\Omega(T_0)\to \Omega(T_1)$. But this boils down to a map $f\co T_0\to T_1$ which preserves the partial
order relation $\le$ and which preserves \emph{independence}. That is to say, if $x,y\in T_0$ are distinct and neither $x\le y$
nor $y\le x$ takes place, then none of $f(x)\le f(y)$, $f(y)\le f(x)$ takes place. \\
It does not follow that such an $f$ is injective. But it does follow that for every $z\in T_1$ the preimage $f^{-1}(z)$
is a linearly ordered subset of $T_0$. Moreover, if $x\in T_0\smin f^{-1}(z)$ satisfies $x\ge y_0$ for some $y_0\in f^{-1}(z)$, then it
satisfies $x\ge y$ for all $y\in f^{-1}(z)$. (Suppose not; then there is $y_1\in f^{-1}(z)$ such that $x$ and $y_1$ are independent;
but $f(x)> f(y_0)=f(y_1)$, contradiction since $f$ preserves independence.)
\end{rema}

\begin{defi}
 The \emph{closed $n$-corolla} $\ccorl_n$ is the unique tree in $\Omega_\cl$ with one vertex of valence $n+1$ and $n$ vertices of valence $1$.
(Just below: an artist's impression of $\ccorl_5$.)
\end{defi}
\[
\xymatrix@C=5pt@R=15pt{
\bullet \ar@{-}[drr] & \bullet \ar@{-}[dr]  & \bullet \ar@{-}[d] & \bullet \ar@{-}[dl]  & \bullet \ar@{-}[dll]  \\
&&  \bullet \ar@{-}[d] \\
&&
}
\]

\begin{lem} \emph{\cite[Lem 7.12]{BW15}}
 For all $1$-reduced monochromatic topological operads $P$ and $Q$ the restriction map
$
 \RHom(N_dP,N_dQ) \rightarrow \RHom(\iota^*N_dP,\iota^*N_dQ)
$
is a weak equivalence.
\end{lem}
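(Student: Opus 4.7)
The idea is that both derived mapping spaces are controlled, up to homotopy, by coherent systems of maps on corollas, and that $1$-reducedness identifies the corolla data on $\Omega$ with the reduced-corolla data on $\overline{\Omega}$.

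First, I would invoke the preceding lemma, which says that $N_dP$ and $N_dQ$ are dendroidal Segal spaces. The Segal property gives a decomposition $N_dP(T) \simeq \prod_v P(\vert v \vert)$ over the vertices of $T$, and analogously for $N_dQ$. This identifies a derived natural transformation $N_dP \to N_dQ$ with a coherent system of maps $P(n) = N_dP(t_n) \to N_dQ(t_n) = Q(n)$ compatible with the face and degeneracy structure in $\Omega$.

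Second, for any tree $T \in \Omega$ with leaves, there is a canonical morphism in $\Omega$ from $T$ to the reduced tree obtained by capping each leaf of $T$ with a stump, realized as a composition of outer face maps. Since $P(0) \simeq \ast$ and $Q(0) \simeq \ast$, the morphisms induced on the nerves are weak equivalences of simplicial sets, the forgotten factors being products of copies of $P(0)$ resp.\ $Q(0)$. In particular $N_dP(\overline{t}_n) \simeq P(n) = N_dP(t_n)$ and similarly for $Q$, so that corolla and reduced-corolla values of the two nerves agree up to weak equivalence.

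Third, the restricted spaces $\iota^*N_dP$ and $\iota^*N_dQ$ satisfy the analogous reduced Segal property in $\sodSet$, so derived natural transformations between them are controlled by coherent systems of maps on reduced corollas $\overline{t}_n$ compatible with the face and degeneracy structure of $\overline{\Omega}$. By the equivalences of the second step, this reduced corolla data matches, up to weak equivalence, the corolla data of the first step, yielding the desired weak equivalence.

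The main obstacle is making the ``coherent system of corolla data'' description precise and showing that it commutes uniformly with the restriction $\iota^*$. This can be carried out via the principle of Proposition \ref{principle} applied to a cellular decomposition of dendroidal Segal spaces built from representable reduced corollas glued along their faces; alternatively, by a Reedy-style argument comparing matching towers for $N_dP$ and $N_dQ$ along $\iota$, where each layer's comparison map is a weak equivalence by the $1$-reducedness argument of the second step.
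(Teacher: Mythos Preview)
The paper does not give its own proof of this lemma: it is stated with a citation to \cite[Lem 7.12]{BW15} and no argument is supplied. So there is nothing in the present paper to compare your attempt against.

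As for your sketch itself, the intuition is sound---the Segal property reduces both sides to corolla data, and $1$-reducedness makes the values on $t_n$ and $\overline t_n$ weakly equivalent via the stump-capping maps---but what you have written remains a heuristic rather than a proof. The passage ``this identifies a derived natural transformation with a coherent system of maps on corollas compatible with faces and degeneracies'' is precisely the hard part, and you defer it to an unspecified invocation of Proposition~\ref{principle} or an unexecuted Reedy argument. To make either route rigorous you would need to specify exactly which homotopy-invariant property of the source you are verifying on representables and under homotopy pushouts, and to check that the comparison map $\RHom(N_dP,N_dQ)\to\RHom(\iota^*N_dP,\iota^*N_dQ)$ really is induced by a natural transformation on which that cell-by-cell analysis can be run. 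A cleaner strategy, closer to what \cite{BW15} actually do, is to exhibit a derived left or right adjoint to $\iota^*$ and show that the relevant unit or counit is a levelwise equivalence on $1$-reduced dendroidal Segal spaces; your stump-capping observation is then exactly the computation needed at each tree.
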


In this restricted setting we can define levelwise boundaries and coboundaries generalizing the levelwise
boundaries in the description of the Fulton-MacPherson operad of \ref{FMop}.

\begin{defi} \label{defi-bocobo}
 Let $X \in \sdSet$ be a $1$-reduced dendroidal Segal space. In this definition we only
 use the restriction of $X$ to $\sodSet$.
 The \emph{$n$-th operadic boundary object} is the homotopy colimit
 \[
  \bound_n X := \hocolim_{(S,f) \in \ccorl_n/\Omega\lg n-1\rg_\cl} X_S.
 \]
 (This is a homotopy colimit of a contravariant functor, the functor which takes $(f \colon \ccorl_n \rightarrow S)$ to $X_S$.)
The \emph{$n$-th operadic coboundary object} is the homotopy limit
\[
  \cobound_n X := \holim_{(S,f) \in \Omega\lg n-1\rg_\cl/\ccorl_n} X_S.
 \]
 Both spaces come with an obvious $\Sigma_n$-action. There is a natural $\Sigma_n$-map from $\bound_n X$ to $X_{\ccorl_n}$
 induced by the various $f$ in pairs $(S,f)$, and similarly there is a natural $\Sigma_n$-map
from $X_{\ccorl_n}$ to $\cobound_n X$ induced by the various $f$ in pairs $(S,f)$.
The functor
\begin{align*}
  J_n \colon \sdSet \rightarrow \Fun(\Sigma_n \times [2],\sSet)  \\
	X \mapsto ( \bound_nX \rightarrow X_{\ccorl_n} \rightarrow \cobound_nX)
\end{align*}
sends a reduced dendroidal Segal space to the diagram consisting of these two maps. By composing the two maps we obtain
\begin{align*}
\partial J_n \colon \sdSet \rightarrow \Fun(\Sigma_n \times [1],\sSet)  \\
	X \mapsto ( \bound_nX \rightarrow \cobound_nX)
\end{align*}
\end{defi}
In other words, $\partial J_n=J_n\circ \rho$ where $\rho\co \Sigma_n\times[1]\to \Sigma_n\times[2]$ is induced
by the order-preserving injection $[1]\to [2]$ which does not take the value 1.
\begin{exam} \label{FMexample}
 Recall the Fulton-MacPherson operad $FM_k$ which was briefly introduced in \ref{FMop}. Then the space $\bound_nN_dFM_k$
 has the homotopy type of the boundary of the compact manifold-with-boundary $FM_k(n)$. Informally this can be seen because all the embeddings of substrata in the stratification of $FM_k(n)$ are cofibrations
 and thus the homotopy colimit in the definition of the operadic boundary object is weakly equivalent to the colimit which is exactly the boundary
 $\partial FM_k(n)$. It follows that we can model the map $\bound_nN_dE_k \rightarrow E_k(n)$ (for the little $k$-disk operad $E_k$)
by the inclusion $\partial FM_k(n) \hookrightarrow FM_k(n)$. In the following paragraph we give a more detailed argument.
\end{exam}

\begin{proof}
In this proof we will use the notation of \ref{defisubtree} and \ref{duggerstuff}.
To show this claim we first want to reduce the indexing category $\ccorl_n/\Omega\lg n-1\rg_\cl$ to a smaller one.
Every morphism $f \colon \ccorl_n \rightarrow T$ for $T \in \Omega\lg n-1\rg_\cl$
factors uniquely through a maximal subtree $T_0$ of $T$ with exactly $n$ outermost edges and all of them (as well as the root of $T_0$)
in the image of $f$. (By \emph{subtree} we mean something connected, so that if $x,z\in T_0$ and $y\in T$ satisfies $x\le y\le z$,
then also $y\in T_0$.) Let $I$ denote the subcategory of $\ccorl_n/\Omega\lg n-1\rg_\cl$ on all pairs $(S,g)$
such that $S$ has exactly $n$ outermost edges and all of them as well as the root are in the image of $g$.
We have just seen that the inclusion functor for this subcategory has a right adjoint. Therefore the inclusion
\[
\hocolim_{(S,f) \in I} FM_k(S) \longrightarrow  \bound_nN_dFM_k
\]
(where $FM_k(S)$ is short for $(N_dFM_k)_S$) is a weak equivalence by \cite[Thm.6.7]{Du}. Note in passing that the information provided
by the $f\co \ccorl_n\to S$ in a pair $(S,f)$ amounts to nothing more than a labeling of the outermost edges of $S$ with labels $1,2,\dots,n$.
We take this as an excuse for writing $S$ instead of $(S,f)$, but the labeling of the outermost edges remains important and must be
remembered. \\
Let $I^s$ denote the full subcategory of $I$ on those objects $S$ which have no vertices of valence $2$.
The inclusion $I^s\to I$ has a left adjoint $\sh\co I\to I^s$ and the unit morphisms for this adjunction
induce isomorphisms $FM_k(\sh(S))\to FM_k(S)$, for $(S,f)$ in $I$. By \cite[Thm.6.16.]{Du} the inclusion
\[ \hocolim_{S \in I^s}FM_k(S) \longrightarrow  \hocolim_{S \in I} FM_k(S)   \]
is a weak equivalence. ---
Recall the category $\Psi_n$ from the definition \ref{FMop} of the Fulton-MacPherson operad and let $\Psi^-_n$
denote the full subcategory of $\Psi_n$ on all trees not equal to the $n$-corolla.
This category $\Psi^-_n$ is equivalent to $I^{sh}$. We can therefore view $S\mapsto FM_k(S)$ as a
functor on $\Psi^-_n$. It remains only to show that the map
\[
 \hocolim_{S \in \Psi^-_n}FM_k(S) \rightarrow \partial FM_k(n)
\]
from the homotopy colimit to the actual colimit of this functor is a homotopy equivalence.
The plan is to show that this functor $FM_k \colon \Psi^-_n \rightarrow \sSet/ FM_k(n)$ is projectively cofibrant.
The category $\Psi^-_n$ is directed in the sense that
there is a faithful functor $\Psi^-_n \rightarrow \mathbb{N}$.
This is trivial since $\Psi^-_n$ is a finite poset, but here we have a preferred choice: the map which to every tree in $\Psi_n$
associates the number of its vertices.
Hence $\Psi^-_n$ becomes a Reedy category by defining the degree of a tree to be the negative of its number of vertices. Then every non-identity
morphism in $\Psi^-_n$ raises this degree. Let $M$ be some model category. A diagram $D \colon \Psi^-_n \rightarrow M$ is Reedy cofibrant if all its
latching maps are cofibrations. But by \cite[Thm 13.12]{Du} the Reedy model structure and the projective model
structure agree on upwards-directed Reedy categories and thus $D$ is Reedy cofibrant if and only if it is projectively cofibrant.
For every projectively cofibrant diagram its homotopy colimit is weakly equivalent to the actual colimit.
We thus want to show that $FM_k$ is Reedy cofibrant as a functor on $\Psi^-_n$. Let $T \in \Psi^-_n$ be a labelled tree.
The latching object $\textup{Lat}_T(FM_k)$ is the colimit over all maps $FM_k(S) \rightarrow FM_k(T)$ with $S \neq T$. But using the description of
the stratification of $FM_k(n)$ given in \ref{FMop} we see that
this map is just the inclusion of a union of substrata of the (closure of the) stratum corresponding to $T$ and thus a cofibration.
\end{proof}

\begin{thm} \label{mainthm}
Let $X$ and $Y$ be $1$-reduced dendroidal Segal spaces.
Then the following square of specialization or restriction maps is a homotopy pullback square:
\[
\begin{tikzcd}
\RHom(X \vert_{{\Omega\lg n\rg_\cl}},Y \vert_{{\Omega\lg n\rg_\cl}}) \arrow[d] \arrow[r] & \RNat(J_nX,J_nY) \arrow[d]  \\
\RHom(X\vert_{\Omega\lg n-1\rg_\cl},Y\vert_{\Omega\lg n-1\rg_\cl}) \arrow[r]& \RNat(\partial J_nX, \partial J_nY)
\end{tikzcd}
\]
\end{thm}
In the remainder
of this section we will make a reduction (by means of proposition \ref{restrictiontodext}) of this theorem to an easier statement.
The idea is to factor the inclusion $\Omega\lg n-1\rg_\cl \rightarrow \Omega\lg n\rg_\cl$ through certain intermediate subcategories
$\acat$ and $\bcat$.

\begin{defi}
Let $\acat$ be the full subcategory of $\Omega_\cl$ on
$\Omega\lg n-1\rg_\cl$ and the closed $n$-corolla $\ccorl_n$\,. Let $\bcat$ be the (slightly larger) full subcategory of
${\Omega\lg n\rg_\cl}$ on all objects of $\Omega\lg n-1\rg_\cl$
and all \emph{extended} (closed) $n$-corollas. These
are the objects of $\Omega_\cl$ which are connected to $\ccorl_n$ by a sequence of degeneracies.
For $n \neq 1$ they have a unique vertex of valence $n+1$ and only vertices of valence $2$ and $1$ otherwise.
\end{defi}

\begin{prop} \label{restrictiontodext}
 Let $X, Y \in \sodSet$ be restrictions of $1$-reduced dendroidal Segal spaces.
 Then the restriction map
 \[
  \RHom(X\vert_{{\Omega\lg n\rg_\cl}},Y\vert_{{\Omega\lg n\rg_\cl}}) \rightarrow
  \RHom(X\vert_{\acat},Y\vert_{\acat})
 \]
is a weak equivalence.
\end{prop}

In the following proofs we will also need the notion of subtree of a given tree $T$. This has already been used in \ref{FMexample}.
\begin{defi} \label{defisubtree}
Let $T$ be a tree in $\Omega_\cl$. A \emph{subtree} $S$ of $T$ consists of a subset of edges of $T$ such that the resulting graph is connected.
In this case $S$ should be understood as an object of $\Omega_\cl$ such that the inclusion $S \subset T$ is a morphism in $\Omega_\cl$.
\end{defi}

For example the closed $k$-corolla $\ccorl_k$ can be realized as a
subtree of the closed $n$-corolla $\ccorl_n$ (in $\binom{n}{k}$ different ways, assuming $k \neq 0$).
\begin{rema} \cite[Chapter 6]{Du} \label{duggerstuff}
 Let $\alpha \colon I \rightarrow J$ be a functor between small categories. For any $j \in J$ let $(j \downarrow \alpha)$ denote the category whose objects are pairs
$(i,f \colon j \rightarrow \alpha(i))$ and morphisms $(i,f) \rightarrow (i',f')$ are given by commutative triangles
\[
 \begin{tikzcd}
 j \arrow[r,"f"] \arrow[rd,"f'"] & \alpha(i) \arrow[d] \\
 \ & \alpha(i').
 \end{tikzcd}
\]
The functor $\alpha$ is called \emph{homotopy terminal} if for every $j$ the category $(j \downarrow \alpha)$ has a contractible
classifying space. Homotopy
terminal functors can be used to simplify homotopy colimits. More precisely for any homotopy terminal $\alpha$ and any diagram $X \colon J \rightarrow \sSet$
there is a natural weak equivalence
\[
 \hocolim_I \alpha^*X \rightarrow \hocolim_J X.
\]
There is a dual notion of a \emph{homotopy initial} functor $\beta \colon I \rightarrow J$. It has the property that all
overcategories $(\beta \downarrow j)$, defined dually to the undercategories above, are non-empty and contractible. In this case there is a
natural weak equivalence
\[
 \holim_I \beta^*Y \leftarrow \holim_J Y
\]
for all diagrams $Y \colon J \rightarrow \sSet$.
\end{rema}
In the following we write $\rran_F$ and $\rlan_F$ respectively for derived right and left Kan extensions along a functor $F$.
\begin{lem}
 Let $X$ in $\sodSet$ be a $1$-reduced dendroidal Segal space.
 Let $\phi$ denote the inclusion of $\bcat$ in $\Omega\lg n\rg_\cl$.
 Then the derived unit morphism
\[
 X\vert_{{\Omega\lg n\rg_\cl}} \rightarrow \rran_{\phi}~\phi^*\big(X\vert_{{\Omega\lg n\rg_\cl}}\big)
\]
is a weak equivalence.
\end{lem}
\begin{proof} We allow ourselves to write $\rran_{\phi}~\phi^*X$ instead of the more complicated expression in the statement.
By definition we have
\[
 (\rran_{\phi}~\phi^*X)_T \simeq \holim_{(f \colon S \rightarrow T)  \in \bcat/T} X_S\,.
\]
(We may also write $(S,f)$ instead of $(f \colon S \rightarrow T)$.)
It suffices to show that $\rran_{\phi}~\phi^*X$ has the Segal property. Indeed the unit maps $X_T \rightarrow (\rran_{\phi}~\phi^*X)_T$
are weak equivalences for all (closed) corollas $T$ in $\Omega\lg n\rg_\cl$. This follows from the fact that all these corollas are already in $\bcat$.
We can furthermore assume $\rran_{\phi}~\phi_*X$ to be projectively fibrant by choosing
a fibrant model for a homotopy limit of Kan complexes.  \\
We want to replace the indexing category $\bcat/T$ by an easier subcategory $\textsf{C}$ such that its inclusion functor is homotopy terminal.
The set of objects of $\textsf{C}$ is the set of subtrees as defined in \ref{defisubtree} of
$T$ understood as pairs $(A,a)$ of subtrees $A$ (which are objects of $\bcat$ in their own right) with fixed inclusions $a \colon A \rightarrow T$. Maps $\tau \colon (A,a) \rightarrow (B,b)$ are given by commutative triangles
\[
\begin{tikzcd}
 T   & & A \arrow[ll,"a"] \arrow[dl,"\tau"] \\
 & B \arrow[ul,"b"]. &
\end{tikzcd}
\]
The inclusion of $\textsf{C}$ in $\bcat/T$ has a left adjoint. (This works only because we are using $\bcat$
instead of the smaller $\acat$.) Consequently the inclusion of $\textsf{C}$ into $\bcat/T$ is indeed homotopy terminal.
It follows from the contravariant version of \cite[Thm. 6.12]{Du} that
the forgetful projection
\[
\holim_{(S,f) \in \bcat/T} X_S  \longrightarrow \holim_{(A,a) \in \textsf{C}} X_A
\]
is a weak equivalence.   \\
Let $X'$ denote the induced functor $\textsf{C} \rightarrow \sSet$. We want to prove that the decompositions of the $X'_S$ given by the Segal property
of $X$ are natural in this reduced setting, i.e. every morphism $S \rightarrow S'$ in $\textsf{C}$ induces a map $X'_{S'} \rightarrow X'_{S}$ which
respects the product decomposition and can be defined factorwise. Let $g \colon S \rightarrow S'$ be any morphism in $\textsf{C}$.
It is an inclusion of a subtree. Let $\{v_1, \ldots, v_k\}$ be the set of vertices of $T$.
Let $\vert v_j \vert_S$ denote the number of inputs of $S$ at $v_j$. Since we assumed $X$ to be $1$-reduced and to satisfy the Segal property
we know that the morphism
\[
 X_S \rightarrow X^{\Sc[S]} \cong X_{\ccorl_{\vert v_1 \vert_S}} \times \ldots \times X_{\ccorl_{\vert v_k \vert_S}}
\]
induced by the Segal core inclusion $\Sc[S] \rightarrow \Omega_\cl[S]$ is a trivial Kan fibration. By functoriality of $X$ the square
\[
 \begin{tikzcd}
  X_{S'} \arrow[r] \arrow[d] & X_S \arrow[d] \\
	X^{\Sc[S']} \arrow[r] & X^{\Sc[S]}
 \end{tikzcd}
\]
commutes. We only have to show that the maps $X^{\Sc[S']} \rightarrow X^{\Sc[S]}$ can be defined factorwise. But this is immediate because the map
$\Sc[S] \rightarrow \Sc[S']$ is induced by a morphism $S\to S'$ over $T$. Let $X''$ denote
the functor $\textsf{C} \rightarrow \sSet$
defined by the composition $X' \circ \Sc$. We have a natural transformation $X' \rightarrow X''$ which is a levelwise trivial fibration. Hence
\[
 \holim_{S \in \textsf{C}} X_S \simeq \holim_{S \in \textsf{C}} X''_S.
\]
\end{proof}

\begin{lem}
 Let $X \in \sodSet$ be a $1$-reduced dendroidal Segal space. Write $\psi$ for the inclusion $\psi\co \acat \rightarrow \bcat$. The derived
 counit map
 \[
  \rlan_{\psi}~\psi^*(X\vert_{\bcat}) \rightarrow X\vert_{\bcat}
 \]
is a weak equivalence.
\end{lem}

\begin{proof}
We have to show that this morphism induces a weak equivalence of simplicial sets at any tree $T$. For trees in $\acat$
there is nothing to show. So let $T$ be an extended (closed) $n$-corolla. The space $( \rlan_{\psi}~\psi^*(X\vert_{\bcat}))_T$ is the homotopy
colimit
\[
 \hocolim_{(S,f) \in (T / \acat)} X_S.
\]
Here $(S,f)$ is short for $f \colon T \rightarrow S$. We want to find an easier category $\textsf{C}$ and a homotopy initial functor
$\textsf{C} \rightarrow T/\acat$.
Let the set of objects of $\textsf{C}$ be the set of all pairs $(S,f)$ such that $S$ is
a tree in $\acat$ with exactly $n$ vertices of valence $1$ and $f\co T \rightarrow S$ is a morphism such every outermost edge (including the root)
of $S$ is in the image of $f$.
A morphism $\tau \colon (S_0,f) \rightarrow (S_1,g)$ is given by a commutative triangle
\[
\begin{tikzcd}
 T \arrow[rr,"f"] \arrow[dr,"g"] & & S_0 \arrow[dl,"\tau"] \\
 & S_1 &
\end{tikzcd}
\]
in $\bcat$.
The category $\textsf{C}$ is a full subcategory of
$T/\acat$.
The inclusion functor $\textsf{C} \rightarrow T/\acat$ has a right adjoint.
This implies that the inclusion is homotopy initial.
By the contravariant version of \cite[Thm 6.7.]{Du} we get a weak equivalence
\[
\hocolim_{(S,f) \textup{ in }\textsf{C}} \phi^*X_S \xrightarrow{~~\simeq~~} \rlan_{\psi}\psi^*\phi^*X_T
\]
In a second step we replace the indexing category $\textsf{C}$ by another even easier one.
Let $\textsf{C}^{\sh}$ be the full subcategory of $\textsf{C}$ on the pairs $(S,f)$ such that $S$ has no vertices of valence $2$.
The inclusion $\alpha \colon \textsf{C}^{\sh} \rightarrow \textsf{C}$ has a left adjoint $\beta$. Clearly $\beta\alpha=\Id$
and the counit transformation for this adjunction is the identity.
Let $\tau \colon \Id_\textsf{C} \rightarrow \alpha\beta$ denote the unit transformation for this adjunction.
Because $X$ has the Segal property and is $1$-reduced the induced map
$\tau^*_S \colon X_{\beta(S)} \rightarrow X_S$ is a weak equivalence for every $(S,f)$ in $\textsf{C}$.
The contravariant version of \cite[Prop 6.16]{Du} thus implies that the canonical morphism
\[
 \hocolim_{(S,f) \textup{ in }\textsf{C}^{\sh}} X_{S} \rightarrow \hocolim_{(S,f) \textup{ in }\textsf{C}} X_S
\]
is a weak equivalence. The category $\textsf{C}^{\sh}$ has an initial object given by the closed $n$-corolla $\ccorl_n$
together with the unit map $T \rightarrow \ccorl_n$. Thus
\[
 X_{\ccorl_n} \simeq \hocolim_{(S,f) \textup{ in }\textsf{C}^{\sh}} X_{S}.
\]
By our assumptions on $X$ this concludes the proof.
\end{proof}

Thus theorem \ref{mainthm} reduces to:
\begin{thm} \label{zweieinszwanzig}
Let $X$ and $Y$ be $1$-reduced dendroidal Segal spaces. The following square of specialization maps is a homotopy pullback:
\[
\begin{tikzcd}
\RHom(X\vert_{\acat},Y\vert_{\acat}) \arrow[d] \arrow[r] & \RNat(J_nX,J_nY) \arrow[d]  \\
\RHom(X\vert_{\Omega\lg n-1\rg_\cl},Y\vert_{\Omega\lg n-1\rg_\cl}) \arrow[r]& \RNat(\partial J_nX, \partial J_nY)
\end{tikzcd}
\]
\end{thm}

This now follows from \ref{ExciThm}. We note that the maps in the square need careful definitions. They will be given in section~\ref{sec-exci}.
The right hand column of this homotopy pullback square can be modified as explained in the following remark.

\begin{rema} \label{remarkThomas}
 The following square is a homotopy pullback square (and we switch from $n$ to $k$):
 \[
  \begin{tikzcd}
   \RNat(J_kX,J_kY) \arrow[r] \arrow[d] & \RMap_{\Sigma_k}(X_{\ccorl_k},Y_{\ccorl_k}) \arrow[d] \\
   \RNat(\partial J_kX, \partial J_kY) \arrow[r] & \RMap_{\Sigma_k}(\bound_kX \rightarrow X_{\ccorl_k}\, , \,Y_{\ccorl_k} \rightarrow \cobound_k Y)
  \end{tikzcd}
\]
The horizontal maps are the obvious forgetful maps. The right hand vertical arrow is explained by the following diagram:
\[
 \begin{tikzcd}
 \bound_kX \arrow[d] \arrow[rd,dashrightarrow] & \ \\
 X_{\ccorl_k} \arrow[r] \arrow[rd,dashrightarrow] & Y_{\ccorl_k} \arrow[d] \\
 \   & \cobound_kY
 \end{tikzcd}
 \]
 (A short argument for the homotopy pullback property: compare the horizontal homotopy fibers.)
Therefore the main theorem as stated in the introduction is equivalent to the statement that the square
\[
\begin{tikzcd}
\RHom(U_kX,U_kY) \arrow[d] \arrow[r] & \RMap_{\Sigma_k}(X_{\ccorl_k},Y_{\ccorl_k}) \arrow[d]  \\
\RHom(U_{k-1}X,U_{k-1}Y) \arrow[r]& \RMap_{\Sigma_k}(\bound_kX \rightarrow X_{\ccorl_k}\,
, \,Y_{\ccorl_k} \rightarrow \cobound_k Y)
\end{tikzcd}
\]
is a homotopy pullback square. Note that $U_kX$ can be read as $X\vert_{\Omega\lg k\rg_\cl}$ etc.
The interesting ``news'' here is that for a homotopical description of $k$-th layer in the tower, we
need to know only the diagrams $\bound_kX \to X_{\ccorl_k}$ (for the source) and $Y_{\ccorl_k}\to \cobound_kY$
(for the target). As before, they are diagrams of spaces with an action of $\Sigma_k$\,.
\end{rema}

\section{Excision in categories} \label{sec-exci}
\label{AppendixEx} 

Let $I$ be a small category.
In this section we will investigate how spaces of natural transformations between two $I$-shaped diagrams of simplicial sets change if we remove or
add a new object (with morphisms) from or to $I$. Throughout this section all model categories (of $I$-diagrams) are equipped with the projective model structure. We choose
functorial fibrant and cofibrant replacements. (Their existence follows from the small object argument.) As a model for derived mapping spaces
$\RNat(F,G)$
we
choose the simplicial mapping space between the cofibrant and fibrant replacements $\Map(F^c,G^f)$.\\

\subsection{Boundaries and coboundaries revisited}
From now on we assume the indexing category $I$ to be skeletal (distinct objects are not isomorphic). Let $x$ be an object of $I$ such that
no object $i$ (distinct from x) which admits a morphism to $x$ receives a morphism from $x$. We also require that every
endomorphism of $x$ be an automorphism.
This is for example the case if $I$ is a direct category. The most important example for us is the case
where $I = (\acat)^{\op}$ and $x = \ccorl_n$.

The functor $J_n$ of definition~\ref{defi-bocobo} generalizes in an obvious way
to a functor $J_x$ for arbitrary $I$-shaped diagrams of spaces.
\begin{defi}
Let $I$ and $x$ be as above.
 For any diagram $F \in \Fun(I,\sSet)$ we define
\[
 \bound_x(F) := \hocolim_{(f \colon y \rightarrow x) \textup{ in } I/x} F(y)
\]
and
\[
 \cobound_x(F) := \holim_{(g \colon x \rightarrow y) \textup{ in } x/I} F(y).
\]
These spaces serve as a replacement for the operadic boundary and coboundary space, respectively. They come with a natural action of the automorphism
group of $x$. The functor
\[
J_x \colon \Fun(I,\sSet) \rightarrow \sSet^{\Aut(x) \times [2]}
\]
 is now defined by sending a diagram $F$ to the sequence
\[
 \bound_x(F) \rightarrow F(x) \rightarrow \cobound_x(F).
\]
We note this is to be viewed as a functor from $\Aut(x) \times [2]$ to simplicial sets.
Then $\partial J_x$ is defined by sending $F$ to the subsequence
\[
 \bound_x(F) \rightarrow \cobound_x(F)
\]
with the same equivariance properties. Both functors respect levelwise weak equivalences. There is a functor $\rho$
from $\sSet^{\Aut(x) \times [2]}$ to $\sSet^{\Aut(x) \times [1]}$ given by omitting the middle object and composing the two morphisms;
we can write $\rho(J_x)$ instead of $\partial J_x$.

\end{defi}

We obtain a commutative diagram of mapping spaces, where the superscripts $c$, $f$ and $r$ have the following meaning:
$c$ is for cofibrant replacement, $f$ is for fibrant replacement and $r$ is for restriction from $I$ to $I\smin x$, full subcategory of $I$.
\[
\begin{tikzpicture}[baseline= (a).base]
\node[scale=.85] (a) at (0,0){
 \begin{tikzcd}
 \Map(F^c,G^f) \arrow[d] \arrow[r] & \Map(J_xF^c,J_xG^f) \arrow[r] \arrow[d] & \Map((J_xF^c)^c,(J_xG^f)^f) \arrow[d] \\
 \Map((F^c)^r,(G^f)^r) \arrow[d] \arrow[r] & \Map(\partial J_xF^c,\partial J_xG^f) \arrow[r] \arrow[d] & \Map((\partial J_xF^c)^c,(\partial J_xG^f)^f) \arrow[d] \\
 \Map(((F^c)^r)^c,((G^f)^r)^f) \arrow[r] & \Map(\partial J_x(((F^c)^r)^c),\partial J_x(((G^f)^r)^f)) \arrow[r] & \Map((\partial J_x(F^c)^c)^c,(\partial J_x(G^f)^f)^f).
  \end{tikzcd}
  };
\end{tikzpicture}
\]
We will informally abbreviate the outer square to
\[
\begin{tikzcd}
\RNat(F,G) \arrow[d] \arrow[r] & \RNat(J_xF,J_xG) \arrow[d]  \\
\RNat(F^r,G^r) \arrow[r]& \RNat(\partial J_xF, \partial J_xG).
\end{tikzcd}
\]
\emph{Justification:} There are natural weak equivalences
\begin{alignat*}{3}
 \RNat(F^r,G^r)  & \longrightarrow && \Map(((F^c)^r)^c,((G^f)^r)^f) \\
 \RNat(J_xF,J_xG) & \longrightarrow && \Map((J_xF^c)^c,(J_xG^f)^f) \\
 \RNat(\partial J_xF,\partial J_xG) & \longrightarrow  && \Map((\partial J_x(F^c)^c)^c,(\partial J_x(G^f)^f)^f)
\end{alignat*}
given by suitable pre- and postcompositions.

\begin{thm} \label{ExciThm}
 Let $I$ and $x \in I$ be as above. Let
 $F$ and $G$ be functors from $I$ to $\sSet$ and let $F^r$ and $G^r$ be their restrictions to $I\smin x$.
 Then the following square is a homotopy pullback:
\[
\begin{tikzcd}
\RNat(F,G) \arrow[d] \arrow[r] & \RNat(J_xF,J_xG) \arrow[d]  \\
\RNat(F^r,G^r) \arrow[r]& \RNat(\partial J_xF, \partial J_xG).
\end{tikzcd}
\]
\end{thm}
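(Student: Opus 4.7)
My plan is to verify the homotopy pullback property by showing that the induced map between vertical homotopy fibers of the two columns is a weak equivalence. Write $J := I\setminus\{x\}$ and let $\iota\colon J\hookrightarrow I$ be the inclusion, so $F^r=\iota^*F$ and similarly for $G$. Using the projective model structure on $I$-diagrams, replace $F$ by a projectively cofibrant $F^c$ and $G$ by a projectively fibrant $G^f$, and compute all four corners as strict mapping complexes between these replacements (and their images under $\iota^*$, $J_x$, $\partial J_x$). The hypothesis on $x$ ensures that $\iota^*F^c$ and $\iota^*G^f$ remain projectively cofibrant/fibrant on $J$, so both vertical maps are honest Kan fibrations of simplicial mapping complexes, and it suffices to exhibit, for any $0$-simplex $\phi\in\Map(\iota^*F^c,\iota^*G^f)$, an equivalence between the strict fibers of the two vertical maps over $\phi$ and its image under the bottom map.

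\textbf{Fiber identification.} An extension of $\phi$ to a natural transformation $F^c\to G^f$ is a choice of a single $\Aut(x)$-equivariant map $f_x\colon F^c(x)\to G^f(x)$ satisfying two families of compatibility conditions: one for every non-identity arrow $y\to x$ and one for every non-identity arrow $x\to y$. The assumption that these two collections of arrows are disjoint (and that all self-maps of $x$ are automorphisms) lets these conditions decouple into independent statements, which assemble into commutativity of the single diagram
\[
\begin{tikzcd}
\Bound_x F \arrow[r] \arrow[d,"\Bound_x\phi"'] & F^c(x) \arrow[r] \arrow[d,"f_x"] & \Cobound_x F \arrow[d,"\Cobound_x\phi"] \\
\Bound_x G \arrow[r] & G^f(x) \arrow[r] & \Cobound_x G.
\end{tikzcd}
\]
But the strict fiber of the right vertical map of the theorem over the image $\partial J_x\phi=(\Bound_x\phi,\Cobound_x\phi)$ is, unravelling the definitions of $J_x$ and $\partial J_x$ as $\Aut(x)\times[2]$- and $\Aut(x)\times[1]$-diagrams, precisely the space of equivariant $f_x$ making the two outer squares commute with the prescribed data. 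So the induced map between strict fibers is essentially tautologous.

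\textbf{Main obstacle.} The delicate point is reconciling this 1-categorical "space of fillers" picture with the fact that $\Bound_x$ and $\Cobound_x$ are defined as homotopy (co)limits, while the analysis above uses the strict (co)limits arising from lifting problems in the projective model structure. The hypothesis on $x$ is exactly what permits $I$ to be regarded as a generalized Reedy category with $x$ at top degree and $\Aut(x)$ acting as the only self-maps, so that for projectively cofibrant $F^c$ the map $\colim_{y\to x,\,y\ne x} F^c(y)\to\Bound_x F$ is a weak equivalence, and dually for $\Cobound_x$ and $G^f$. Establishing this compatibility carefully, and checking that $J_x F^c$ remains projectively cofibrant in the category of $\Aut(x)\times[2]$-diagrams (so that $\RNat(J_xF,J_xG)$ is computed by $\Map(J_xF^c,J_xG^f)$), is the technical heart of the proof and is the step most likely to require the detailed lemmas about projective cofibrancy and free $\Aut(x)$-actions near $x$ alluded to in the preceding sections.
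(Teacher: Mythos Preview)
Your strategy of comparing the two vertical homotopy fibers directly, after a single global projective cofibrant/fibrant replacement, is genuinely different from the paper's. The paper never tries to identify the left-hand fiber for a general $F$. Instead it fixes $G$, regards ``the square is a homotopy pullback'' as a property of $F$, and invokes the free-CW principle of Proposition~\ref{principle}: one checks the property for representable $F=\Hom_I(y,-)$ (a short case analysis on whether $y=x$, whether there is a map $y\to x$, or neither) and then shows it is closed under disjoint unions and homotopy pushouts in $F$. The latter two steps are formal once one identifies the right-hand vertical homotopy fiber as the total homotopy fiber of
\[
\begin{tikzcd}
\RHom_{\Aut(x)}(F(x),G(x)) \ar[r]\ar[d] & \RHom_{\Aut(x)}(F(x),\Cobound_xG) \ar[d] \\
\RHom_{\Aut(x)}(\Bound_xF,G(x)) \ar[r] & \RHom_{\Aut(x)}(\Bound_xF,\Cobound_xG),
\end{tikzcd}
\]
which visibly takes coproducts and homotopy pushouts in $F$ to products and homotopy pullbacks. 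This route completely sidesteps the question of whether $J_xF^c$ is cofibrant or whether strict (co)limits compute $\Bound_x$ and $\Cobound_x$.

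There is a real gap at exactly the point you flag, and your proposed fix via Reedy theory does not work. The hypothesis on $x$ is purely local: it constrains only morphisms touching $x$, so $I$ is not a generalized Reedy category in any sense that helps. In particular, projective fibrancy of $G^f$ is merely levelwise Kan, and the coslice $x\backslash(I{-}x)$ can be arbitrary, so there is no reason for the strict limit $\lim_{x\to y,\,y\ne x}G^f(y)$ to agree with $\Cobound_xG$. (Already if the coslice contains a parallel pair acting as the identity on a Kan complex $K$, the strict limit is $K$ while the homotopy limit is the free loop space $LK$.) Hence $J_xG^f$ need not be fibrant as an $\Aut(x)\times[2]$-diagram, and the paper in fact takes a further replacement $(J_xG^f)^f$ to model the upper-right corner. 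Your ``essentially tautologous'' map therefore compares the wrong objects: the left strict fiber records compatibility with the strict colimit and limit, whereas the right-hand corner involves the homotopy versions after additional replacement. Bridging that discrepancy is the actual content of the theorem, not a formality; the paper's cell-induction on $F$ is one clean way to avoid ever having to bridge it.
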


\subsection{D\'evissage}
We will prove theorem~\ref{ExciThm} in three steps using the principle we developed in \ref{principle}.
Thoughout these steps we will keep the notation of \ref{ExciThm}.

\begin{lem}
Let $F$ the homotopy pushout of $F_1 \leftarrow F_0 \rightarrow F_2$. If the statement \ref{ExciThm} holds for the $F_i$ then it holds for $F$.
\end{lem}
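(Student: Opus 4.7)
The plan is to reduce the original square to an equivalent square in which every corner, as a contravariant functor of $F$, sends homotopy pushouts of $F$ to homotopy pullbacks of values. The obstruction to attempting this directly on the square of \ref{ExciThm} is the appearance of $\Cobound_x F$, which is a homotopy limit in $F$ and therefore does not commute with homotopy pushouts in $F$. The general categorical content of remark \ref{remarkThomas} exactly removes this obstruction by moving the $\Cobound_x$ to the target side.

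First I would check that the analog of \ref{remarkThomas} is valid in the present generality: for any $F,G \in \Fun(I,\sSet)$ the square
\[
\begin{tikzcd}
\RNat(J_xF,J_xG) \arrow[r] \arrow[d] & \RMap_{\Aut(x)}(F(x),G(x)) \arrow[d] \\
\RNat(\partial J_xF,\partial J_xG) \arrow[r] & \RMap_{\Aut(x)}\bigl((\Bound_xF \to F(x)),(G(x) \to \Cobound_xG)\bigr)
\end{tikzcd}
\]
is a homotopy pullback. This is proved exactly as indicated in \ref{remarkThomas}: both horizontal homotopy fibres over a fixed map $\varphi \colon F(x) \to G(x)$ naturally parametrise the same data, namely pairs of equivariant fillings $\Bound_xF \to \Bound_xG$ and $\Cobound_xF \to \Cobound_xG$ compatible with $\varphi$. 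Splicing this homotopy pullback to the right of the square of \ref{ExciThm} shows that \ref{ExciThm} holds for a given $F$ if and only if the outer rectangle
\[
\begin{tikzcd}
\RNat(F,G) \arrow[r] \arrow[d] & \RMap_{\Aut(x)}(F(x),G(x)) \arrow[d] \\
\RNat(F^r,G^r) \arrow[r] & \RMap_{\Aut(x)}\bigl((\Bound_xF \to F(x)),(G(x) \to \Cobound_xG)\bigr)
\end{tikzcd}
\]
is a homotopy pullback.

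Next, I would verify that each corner of this outer square, regarded as a contravariant functor of $F$, sends the homotopy pushout $F = F_1 \cup^h_{F_0} F_2$ to a homotopy pullback of its values at $F_0,F_1,F_2$. For $\RNat(F,G)$ this is the universal property of the derived mapping space. For $\RNat(F^r,G^r)$ it uses that the restriction functor to $I-x$ is colimit-preserving, hence preserves homotopy pushouts. For $\RMap_{\Aut(x)}(F(x),G(x))$ it uses that evaluation at $x$ preserves homotopy pushouts. For the bottom-right corner it uses that $\Bound_x = \hocolim_{I/x}$ commutes with homotopy pushouts, so that $\Bound_xF \to F(x)$ is a homotopy pushout, in the arrow category $\Fun([1],\sSet^{\Aut(x)})$, of the arrows $\Bound_xF_i \to F_i(x)$; mapping this pushout into the fixed arrow $G(x) \to \Cobound_xG$ converts it to a homotopy pullback of mapping spaces.

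Consequently the outer square for $F$ is, cornerwise, the homotopy limit over the cospan $F_1 \to F_0 \leftarrow F_2$ of the outer squares for the $F_i$. By assumption (and the first step) each of the latter outer squares is a homotopy pullback, and a homotopy limit of homotopy pullback squares is again a homotopy pullback square. Therefore the outer square for $F$ is a homotopy pullback, and reversing the first step yields the conclusion for the square of \ref{ExciThm}. The only nontrivial ingredient is the reformulation step, which is forced on us by the failure of $\Cobound_x$ to preserve homotopy pushouts in $F$.
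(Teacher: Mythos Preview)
Your argument is correct. The underlying idea is the same as the paper's: the only obstruction is the appearance of $\Cobound_xF$, which does not take homotopy pushouts in $F$ to homotopy pushouts, and one must eliminate it before the cube argument goes through.

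The paper handles this differently in organisation. Rather than invoking the analogue of remark~\ref{remarkThomas} at the outset to replace the square by one whose corners are manifestly good in $F$, it works directly with the original square and analyses the vertical homotopy fibres of the right-hand column. It identifies each such fibre with the total homotopy fibre of
\[
\begin{tikzcd}
\RHom(F(x),G(x)) \arrow[r]\arrow[d] & \RHom(F(x),\Cobound_xG) \arrow[d] \\
\RHom(\Bound_xF,G(x)) \arrow[r] & \RHom(\Bound_xF,\Cobound_xG),
\end{tikzcd}
\]
in which the dependence on $F$ is only through $F(x)$ and $\Bound_xF$; from this it deduces that the right-hand cube is homotopy cartesian, and finishes with a $4$-cube argument. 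Your route packages the same observation more cleanly: by pasting with the pullback of remark~\ref{remarkThomas} you obtain a square all four of whose corners send homotopy pushouts in $F$ to homotopy pullbacks, and then the conclusion is immediate from the fact that a homotopy limit of homotopy pullback squares is again one. The two arguments are equivalent in content; yours avoids the explicit fibre computation and the $4$-cube bookkeeping at the cost of appealing to remark~\ref{remarkThomas} in slightly greater generality.
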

\begin{proof}
We observe that the two terms on the left hand side turn homotopy pushouts into homotopy pullbacks. It follows that each vertical homotopy
fiber in the left hand column for $F$ becomes
a homotopy pullback of the respective fibers for the $F_i$.
\\
To understand the right hand homotopy fibers we note that we can arrange the resulting spaces into a cube.
\[\begin{tikzcd}[row sep={40,between origins}, column sep={80,between origins}]
      & \RHom(J_xF,J_xG) \ar{rr}\ar{dd}\ar{dl} & & \RHom(J_xF_1,J_xG) \ar{dd}\ar{dl} \\
    \RHom(J_xF_2,J_xG) \ar[crossing over]{rr} \ar{dd} & & \RHom(J_xF_0,J_xG) \\
      & \RHom(\partial J_xF,\partial J_xG)  \ar{rr} \ar{dl} & & \RHom(\partial J_xF_1,\partial J_xG) \ar{dl} \\
    \RHom(\partial J_xF_2,\partial J_xG) \ar{rr} && \RHom(\partial J_xF_0,\partial J_xG) \ar[from=uu,crossing over]
\end{tikzcd}\]
We want to prove that this cube is homotopy cartesian. To do so we pick a point in the initial term $\RHom(\partial J_xF,\partial J_xG)$
of the lower square and thus in each term of the lower square. Then we obtain a square of vertical homotopy fibers and we want to show this square is
homotopy cartesian. \\
One of these vertical homotopy fibers, the homotopy fiber of
\[ \RHom(J_xF,J_xG) \rightarrow \RHom(\partial J_xF,\partial J_xG) \] consists of
$\Aut(x)$-equivariant lifts
\[
 \begin{tikzcd}
 \bound_xF \arrow[d] \arrow[r] & F(x) \arrow[d,dotted] \arrow[r] & \cobound_xF \arrow[d] \\
 \bound_xG \arrow[r] & G(x) \arrow[r] & \cobound_xG.
 \end{tikzcd}
\]
Thus a lift consists of an equivariant morphism $F(x) \rightarrow G(x)$, compatible homotopies $\bound_xF \times \Delta[1] \rightarrow G(x)$ and
$F(x) \times \Delta[1] \rightarrow \cobound_xG$ and a homotopy of homotopies $\bound_xF \times (\Delta[1]\times\Delta[1]) \rightarrow \cobound_xG$.
More formally the space of lifts is the total homotopy fiber of the square
\[
 \begin{tikzcd}
 \RHom(F(x),G(x)) \arrow[d] \arrow[r] & \RHom(F(x),\cobound_xG) \arrow[d]  \\ \RHom(\bound_xF,G(x)) \arrow[r] & \RHom(\bound_xF, \cobound_xG)
 \end{tikzcd}
\]
over the (three) basepoints determined by the basepoint in $\RHom(\partial J_xF,\partial J_xG)$ which we selected.
We obtain similar results for the other three vertical homotopy fibers by replacing $F$ with $F_i$. From these descriptions it is clear that the
square formed by the vertical homotopy fibers is homotopy cartesian. Therefore the cube is homotopy cartesian. \\
To conclude the statement we note that there is a morphism of homotopy cartesian cubes from
\[\begin{tikzcd}[row sep={40,between origins}, column sep={80,between origins}]
      & \RHom(F,G) \ar{rr}\ar{dd}\ar{dl} & & \RHom(F_1,G) \ar{dd}\ar{dl} \\
    \RHom(F_2,G) \ar[crossing over]{rr} \ar{dd} & & \RHom(F_0,G) \\
      & \RHom(F^r,G^r)  \ar{rr} \ar{dl} & & \RHom(F^r_1,G^r) \ar{dl} \\
    \RHom(F^r_2,G^r) \ar{rr} && \RHom(F^r_0,G^r) \ar[from=uu,crossing over]
\end{tikzcd}\]
to
\[\begin{tikzcd}[row sep={40,between origins}, column sep={80,between origins}]
      & \RHom(J_xF,J_xG) \ar{rr}\ar{dd}\ar{dl} & & \RHom(J_xF_1,J_xG) \ar{dd}\ar{dl} \\
    \RHom(J_xF_2,J_xG) \ar[crossing over]{rr} \ar{dd} & & \RHom(J_xF_0,J_xG) \\
      & \RHom(\partial J_xF,\partial J_xG)  \ar{rr} \ar{dl} & & \RHom(\partial J_xF_1,\partial J_xG) \ar{dl} \\
    \RHom(\partial J_xF_2,\partial J_xG) \ar{rr} && \RHom(\partial J_xF_0,\partial J_xG), \ar[from=uu,crossing over]
\end{tikzcd}\]
which we can view as a (homotopy cartesian) $4$-cube, or better, as a square of squares.
By assumption the three squares
\[
\begin{tikzcd}
 \RNat(F_i,G) \arrow[r] \arrow[d] & \RHom(J_xF_i,J_xG) \arrow[d] \\
   \RNat(F_i^r,G^r) \arrow[r] & \RHom(\partial J_xF_i,\partial J_xG)
\end{tikzcd}
\]
are homotopy cartesian. It follows that the square
\[
\begin{tikzcd}
 \RNat(F,G) \arrow[r] \arrow[d] & \RHom(J_xF,J_xG) \arrow[d] \\
   \RNat(F^r,G^r) \arrow[r] & \RHom(\partial J_xF,\partial J_xG)
\end{tikzcd}
\]
is homotopy cartesian as well.
\end{proof}

\begin{lem}
Let $F$ the levelwise disjoint union of functors $F_{\alpha}$. If the statement \ref{ExciThm} holds for the $F_{\alpha}$ then it holds for $F$.
\end{lem}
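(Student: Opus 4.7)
The plan is to follow the template used in the previous (homotopy pushout) lemma: compute the vertical homotopy fibres of the square for $F=\coprod_\alpha F_\alpha$, show that each fibre decomposes as a product over $\alpha$ of the corresponding fibre for $F_\alpha$, and verify that the horizontal comparison between fibres is itself the product of the comparisons for the $F_\alpha$. Because products of weak equivalences of Kan complexes are weak equivalences, this gives the conclusion.

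First I fix a basepoint $\psi\in\RNat(F^r,G^r)$. Since $F^r=\coprod_\alpha F_\alpha^r$ and $\RNat(-,G^r)$ converts coproducts in the source into products, there is a natural identification $\RNat(F^r,G^r)\simeq\prod_\alpha\RNat(F_\alpha^r,G^r)$, so $\psi$ gives basepoints $\psi_\alpha$ in each factor. The same argument applied to $\RNat(-,G)$ yields
\[
\hofib_\psi\bigl(\RNat(F,G)\to\RNat(F^r,G^r)\bigr)\;\simeq\;\prod_\alpha\hofib_{\psi_\alpha}\bigl(\RNat(F_\alpha,G)\to\RNat(F_\alpha^r,G^r)\bigr),
\]
so the left vertical fibre factors as a product.

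For the right vertical fibre I reuse the explicit description obtained in the proof of the preceding lemma: after pushing $\psi$ along the bottom horizontal map, the fibre of $\RNat(J_xF,J_xG)\to\RNat(\partial J_xF,\partial J_xG)$ is the total homotopy fibre of the $\Aut(x)$-equivariant square
\[
\begin{tikzcd}
\RHom(F(x),G(x))\arrow[d]\arrow[r] & \RHom(F(x),\Cobound_xG)\arrow[d]\\
\RHom(\Bound_xF,G(x))\arrow[r] & \RHom(\Bound_xF,\Cobound_xG).
\end{tikzcd}
\]
The functors $F\mapsto F(x)$ and $\Bound_x=\hocolim_{I/x}$ both preserve coproducts (compatibly with the $\Aut(x)$-action), so $F(x)=\coprod_\alpha F_\alpha(x)$ and $\Bound_xF=\coprod_\alpha\Bound_xF_\alpha$ as $\Aut(x)$-spaces. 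Consequently every corner of the square becomes a product over $\alpha$ of the analogous corner for $F_\alpha$, and total homotopy fibres of a product of squares of Kan complexes are computed factorwise; so the right vertical fibre for $F$ is the product over $\alpha$ of the right vertical fibres for the $F_\alpha$.

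To conclude I will observe that, by naturality, the map from the left vertical fibre to the right vertical fibre is the product over $\alpha$ of the comparison maps for the $F_\alpha$. These are weak equivalences by hypothesis, and a product of weak equivalences of Kan complexes is again a weak equivalence, so the square for $F$ is a homotopy pullback. The one point that requires attention — and, as I see it, the only genuine obstacle — is that $\Cobound_x$ does \emph{not} preserve coproducts, so the spaces $\RNat(J_xF,J_xG)$ and $\RNat(\partial J_xF,\partial J_xG)$ themselves do not split as products over $\alpha$; the product decomposition only emerges once the cobound terms have been moved into the \emph{target} of the mapping spaces, as they are in the fibre computation above.
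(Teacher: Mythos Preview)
Your proof is correct and follows essentially the same route as the paper: decompose both vertical homotopy fibres as products over $\alpha$ (using that evaluation at $x$ and $\Bound_x$ preserve coproducts), then use that the factorwise comparison maps are weak equivalences by hypothesis. Your explicit remark that $\Cobound_x$ does not preserve coproducts, and that this is harmless because it appears only in the target variable of the fibre square, is a clarification the paper leaves implicit.
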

\begin{proof}
 The left hand side of the square is easy to understand. If $F = \coprod_{\alpha} F_{\alpha}$ then $\RNat(F,G) = \prod_{\alpha}\RNat(F_{\alpha},G)$ and
 similarly, since restriction preserves disjoint unions,
 $F^r = \coprod_{\alpha} F^r_{\alpha}$ and $\RNat(F^r,G^r) = \prod_{\alpha}\RNat(F^r_{\alpha},G^r)$. \\
 On the right hand side we have $\bound_xF = \coprod_{\alpha}\bound_xF_{\alpha}$.
 The proof follows like the previous one by comparison of the vertical homotopy fibers. Each left hand vertical homotopy fiber for $F$
 decomposes as a product of the corresponding left hand vertical homotopy fibers for the $F_{\alpha}$. \\
 As we have seen in the proof of the previous lemma each right hand vertical homotopy fiber is the total homotopy fiber of
\[
 \begin{tikzcd}
 \RHom(F(x),G(x)) \arrow[d] \arrow[r] & \RHom(\bound_xF,G(x)) \arrow[d] \\ \RHom(F(x),\cobound_xG) \arrow[r] & \RHom(\bound_xF,\cobound_xG)
 \end{tikzcd}
\]
over a compatible selection of three base points which we obtained by choosing a point in $\RNat(\partial J_xF,\partial J_xG)$. Both terms
$F(x)$ and $\bound_xF$ preserve disjoint unions and thus each right hand vertical homotopy fiber splits as a product. By assumption the squares
\[
 \begin{tikzcd}
 \RNat(F_\alpha,G) \arrow[r] \arrow[d] & \RHom(J_xF_\alpha,J_xG) \arrow[d] \\
   \RNat(F_\alpha^r,G^r) \arrow[r] & \RHom(\partial J_xF_\alpha,\partial J_xG)
\end{tikzcd}
\]
are homotopy cartesian.
\end{proof}

\begin{lem}
 The statement \ref{ExciThm} holds for $F$ representable.
\end{lem}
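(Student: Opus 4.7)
My plan is to apply Yoneda's lemma and compute each term of the square explicitly when $F = \Hom_I(c, -)$ is representable, splitting into the subcases $c = x$ and $c \neq x$. Since representables are projectively cofibrant, Yoneda gives $\RNat(F, G) \simeq G(c)$ in both subcases. I also reuse the description from the proof of the preceding homotopy-pushout lemma: over a basepoint of $\RNat(\partial J_x F, \partial J_x G)$, the homotopy fiber of the right-hand vertical is the $\Aut(x)$-equivariant total homotopy fiber of
\[
\begin{tikzcd}[column sep=small]
\RHom(F(x), G(x)) \arrow[r] \arrow[d] & \RHom(F(x), \Cobound_x G) \arrow[d] \\
\RHom(\Bound_x F, G(x)) \arrow[r] & \RHom(\Bound_x F, \Cobound_x G).
\end{tikzcd}
\]

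If $c \neq x$, then $F^r$ is representable in $\Fun(I \setminus \{x\}, \sSet)$, so $\RNat(F^r, G^r) \simeq G(c)$ and the left-hand vertical is a weak equivalence. It then suffices to show that the right-hand vertical is also a weak equivalence. By the Grothendieck construction, $\Bound_x F = \hocolim_{(y,f) \in I/x}\Hom(c, y)$ is equivalent to the nerve of the category of factorizations $c \xrightarrow{g} y \xrightarrow{f} x$, and the canonical map to the discrete set $\Hom(c, x) = F(x)$ has each fiber equipped with an initial object $(c, \alpha, \id_c)$, where the condition $c \neq x$ is used essentially. Hence the fibers are contractible, $\Bound_x F \to F(x)$ is an $\Aut(x)$-equivariant weak equivalence, both vertical arrows of the displayed square become weak equivalences, and the total fiber is contractible.

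If $c = x$, then $F(x) = \Hom_I(x, x) = \Aut(x)$, and the hypothesis on $x$ forces $\Hom_I(x, y) = \emptyset$ for every $y \in I \setminus \{x\}$ admitting a map to $x$; hence $\Bound_x F = \emptyset$. A Yoneda-type argument on the undercategory $x/(I \setminus \{x\})$ identifies $\RNat(F^r, G^r) \simeq \Cobound_x G$, and the restriction map $G(x) \to \Cobound_x G$ agrees with the canonical structure map $\rho$. On the right, with $\Bound_x F = \emptyset$ the displayed square degenerates; using freeness of the $\Aut(x)$-action on $F(x) = \Aut(x)$, the right-hand homotopy fiber over a basepoint $h_3 \colon \Cobound_x F \to \Cobound_x G$ becomes the homotopy fiber of $\rho$ over $h_3(\tau)$, where $\tau \in \Cobound_x F$ is the tautological point given by the structure morphisms. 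A direct comparison then shows that the canonical map from $\RNat(F, G) \simeq G(x)$ into the homotopy pullback of the remaining three corners is a weak equivalence.

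The main obstacle is the subcase $c = x$: the Yoneda identification of $\RNat(F^r, G^r)$ with $\Cobound_x G$ using the full subcategory $x/(I \setminus \{x\})$ of the undercategory, and the careful bookkeeping of $\Aut(x)$-equivariance throughout the comparison of the two vertical homotopy fibers. The subcase $c \neq x$ is essentially a Grothendieck-construction / cofinality argument.
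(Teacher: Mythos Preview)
Your proposal is correct and follows essentially the same route as the paper: a case split on the representing object, Yoneda to identify the left column, and a comparison of vertical homotopy fibers using the total-fiber description from the preceding lemma together with the freeness of the $\Aut(x)$-action on $\Hom(x,x)$.

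The only notable difference is organizational. The paper separates $c\neq x$ into two subcases---one where a morphism $c\to x$ exists and one where it does not---treating the latter trivially (both $F(x)$ and $\Bound_xF$ are empty). You merge these into a single case via the Grothendieck-construction/cofinality argument showing $\Bound_xF\to F(x)$ is a weak equivalence whenever $c\neq x$; your initial-object $(c,\alpha,\id_c)$ in the fiber over $\alpha$ is exactly the right adjoint the paper writes down in its second case, and when $\Hom(c,x)=\emptyset$ your argument degenerates to the paper's third case automatically. This is a mild streamlining. In the case $c=x$, your identification $\RNat(F^r,G^r)\simeq\Cobound_xG$ is the same one the paper uses (and asserts without further comment); the co-Yoneda/hocolim justification you sketch is the right way to fill that in, and your reduction of the right-hand fiber to $\hofib(G(x)\to\Cobound_xG)$ via freeness of $\Aut(x)$ on $\Hom(x,x)$ matches the paper verbatim.
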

\begin{proof}
 Let $F = \Hom(y,-)$, in other words $F$ is (co-)represented by $y$. We will distinguish three different cases.
 First assume $y = x$. Then $\RHom(F,G) \simeq G(x)$ and the left hand vertical arrow becomes
\[
 \begin{tikzcd}
 G(x) \arrow[d] \\
 \holim_{x \rightarrow z} G(z) = \cobound_x G.
 \end{tikzcd}
\]
On the right hand side we have $J_xF = (\emptyset \rightarrow \Hom(x,x) \rightarrow \cobound_x F)$ and
consequently $\partial J_xF = (\emptyset \rightarrow \cobound_x F)$. Thus a point in the right hand vertical homotopy fiber consists
of a choice of an $\Aut(x)$-equivariant lift
\[
\begin{tikzcd}
 \Hom(x,x) \arrow[r,dotted] \arrow[d] & G(x) \arrow[d] \\
 \cobound_x F \arrow[r] & \cobound_x G.
\end{tikzcd}
\]
But since $\Hom(x,x)$ is the free $\Aut(x)$-space on a point (by assumption) these lifts are in a $1$-to-$1$ correspondence to non-equivariant lifts
of points in $\cobound_x G$ to $G(x)$. This space is homotopy equivalent to the left hand homotopy fiber and the induced map between the two is
a weak equivalence. Thus the square of \ref{ExciThm} is a homotopy pullback. \\
As a second case we assume that there is a morphism $y \rightarrow x$ but $y \neq x$.
In this case the left hand vertical morphism is homotopic to the identity
\[
 \begin{tikzcd}
  G(y) \arrow[d] \\
  G^r(y) = G(y).
 \end{tikzcd}
\]
On the right hand side we see that
\[
\bound_xF = \hocolim_{z \rightarrow x} \Hom(y,z) \simeq \Hom(y,x).
\]
This is because we can write $\hocolim_{z \rightarrow x} \Hom(y,z)$ as the classifying space
of the category of diagrams of the form $y \rightarrow z \rightarrow x$ with fixed $y$ and $x$.
That category of diagrams has a subcategory consisting of the diagrams
\[ y \xrightarrow{\Id} y \rightarrow x. \]
The inclusion of this subcategory has a right adjoint given by
\[
 (y \xrightarrow{f} z \xrightarrow{g} x) \mapsto (y \xrightarrow{\Id} y \xrightarrow{gf} x).
\]
Thus the morphism $\bound_xF \rightarrow F(x) = \Hom(y,x)$ is a weak equivalence.
We are thus looking for (equivariant) solutions of
\[
\begin{tikzcd}
\Hom(y,x) \arrow[r] \arrow[d,"="] & \bound_xG \arrow[d] \\
 \Hom(y,x) \arrow[r,dotted] \arrow[d] & G(x) \arrow[d] \\
 \cobound_x F \arrow[r] & \cobound_x G
\end{tikzcd}
\]
(the broken arrow, two primary homotopies and a secondary homotopy).
But the middle morphism is already determined to be the composition
\[
\RHom(y,x) \rightarrow \bound_xG \rightarrow G(x)
\]
and hence the right hand homotopy fiber is contractible as well.
\\
Now assume there is no morphism $y \rightarrow x$. Then $\bound_xF$ as well as $F(x)$ are empty sets.
In this case both vertical
morphisms are isomorphisms and the square is a homotopy pullback square. Thus the statement holds for all representable functors.
 \end{proof}

\begin{rema}
 In the case $I^{\op} = \acat$ and $x = \ccorl_n$ the object $\bound_xF$ is naturally weakly equivalent
 to $\hocolim_{\ccorl_n \rightarrow S} F(S)$, where we think of $F$ as contravariant and we only allow morphisms
 $\ccorl_n \rightarrow S$ in $\acat$ satisfying
 two conditions: The outer edges of $S$ (including the root) are in the image and $S$ has no vertices of valence $2$.
 This was essentially proved in \ref{FMexample}.
 Similarily we can restrict the homotopy limit $\holim_{S \rightarrow \ccorl_n}F(S)$ to the category of subtrees (as defined in \ref{defisubtree}) of $\ccorl_n$.
 Therefore
 the coboundary object is equivalent to a homotopy limit over a punctured $n$-cube.
\end{rema}

\section{Application} 

In theorem~\ref{mainthm} we have constructed a new and easier model for the homotopy fibers of our tower.
We still have to show that it is possible to obtain any information about the homotopy type of these layers from the new description.

\subsection{The derived mapping spaces of little cube operads}

In this section we will finally apply the machinery we developed to a concrete case. We will compute the connectivity of the layers of the tower for
$\RHom(N_dE_n,N_dE_{n+d})$. Beware that the $d$ in $N_d$ means dendroidal and everywhere else
the $d$ is used to denote the codimension. Rational versions of the results of this section were obtained by Fresse, Turchin and Willwacher in \cite[Chapter 10]{FTW18}.
\begin{rema}
 The derived mapping space $\RHom(N_dE_n\vert_{\Omega\lg 1\rg_\cl},N_dE_{n+d}\vert_{\Omega\lg 1\rg_\cl})$ is contractible.
\end{rema}

\begin{lem} \label{hodim}
 The pair $(E_n(k),\bound_kE_n)$ is homotopy equivalent to a CW pair $(X,Y)$ with no relative cells of dimension above $n(k-1)-1$.
\end{lem}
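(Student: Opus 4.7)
The plan is to work with the Fulton--MacPherson model $FM_n$ as a concrete choice of $E_n$-operad. By Example \ref{FMexample}, the morphism $\operatorname{Bound}_k E_n \rightarrow E_n(k)$ is weakly equivalent as a map of spaces to the inclusion $\partial FM_n(k) \hookrightarrow FM_n(k)$, where the right-hand side is the topological boundary of $FM_n(k)$ as a manifold with corners. This reduces the lemma to exhibiting $(FM_n(k), \partial FM_n(k))$ as a CW pair (up to homotopy) with no relative cells of dimension above $n(k-1)-1$.

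The next step is to read off the dimension. From Example \ref{FMop} we have $FM_n(k) = C_k[\mathbb{R}^n]/G_n$, whose interior is the quotient configuration space $C[k,n]$. Since $\operatorname{Conf}(k,\mathbb{R}^n)$ has dimension $nk$ and the group $G_n$ of translations and scalings has dimension $n+1$, we get $\dim C[k,n] = n(k-1)-1$. Thus $FM_n(k)$ is a compact manifold with corners of dimension exactly $n(k-1)-1$, and its boundary $\partial FM_n(k)$ is the union of all non-open strata.

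The remaining ingredient is the standard structural result that any compact smooth manifold with corners $M$ of dimension $d$ admits a CW pair model $(X,Y)$ relative to its boundary with $Y \simeq \partial M$ and no relative cells of dimension above $d$. I would proceed by smoothing the corners of $FM_n(k)$ to obtain an honest compact manifold with boundary of the same homotopy type as the pair, and then choose a Morse function which is locally constant on the (smoothed) boundary, pointing inward, with no interior critical points of index above $d$ (which is automatic for dimension reasons). The associated handle decomposition of $M$ built from $\partial M$ then yields the desired relative CW structure, with each handle of index $j$ contributing one relative $j$-cell.

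The main obstacle is the technical handling of the corner structure: one must verify that the smoothing procedure does not alter the homotopy types of $FM_n(k)$ or $\partial FM_n(k)$ and is compatible with the identification of Example \ref{FMexample}. However, since the stratification of $FM_n(k)$ described in Example \ref{FMop} is that of a manifold with corners in the standard sense (the closure of each stratum is a product of lower-dimensional $FM_n$-spaces), this is a routine application of the collar neighborhood theorem for manifolds with corners, and the dimension bound $n(k-1)-1$ is preserved throughout.
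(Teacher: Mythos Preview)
Your proposal is correct and follows essentially the same approach as the paper: identify the pair with $(\partial FM_n(k) \hookrightarrow FM_n(k))$ via Example~\ref{FMexample}, note that $FM_n(k)$ is a compact manifold with corners of dimension $n(k-1)-1$, and conclude. The paper's proof is a two-sentence sketch that omits the Morse-theoretic justification and corner-smoothing details you spell out, so your version is simply a more careful elaboration of the same argument.
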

\begin{proof}
 Follows from the construction of the Fulton-MacPherson operad.
 Notably the inclusion of the boundary $\partial FM_n(k) \rightarrow FM_n(k)$ is
 a model for the operadic boundary inclusion map and the smooth manifold $FM_n(k)$ has dimension $n(k-1)-1$. We have shown this in \ref{FMexample}.
\end{proof}

\begin{lem} \label{connec}
 The map $E_n(k) \rightarrow \cobound_k(E_n)$
is $((k-1)(n-2)+1)$ connected.
\end{lem}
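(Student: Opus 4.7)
The plan is to use the Fulton--MacPherson model $FM_n$ for $E_n$ and translate the claim into a connectivity estimate for a cube of configuration spaces. By the remark following Theorem \ref{ExciThm}, $\Cobound_k(N_d FM_n)$ is equivalent to a homotopy limit over the punctured cube of proper subtrees of $\overline{t}_k$; combined with the Segal property and $1$-reducedness of $N_d FM_n$, this identifies it with $\holim_{A \subsetneq \{1,\dots,k\}} FM_n(|A|)$, where the structure maps are the operadic restrictions $FM_n(|B|) \to FM_n(|A|)$ that exist because $FM_n(0) = \ast$. So I need to bound the connectivity of the canonical map $FM_n(k) \to \holim_A FM_n(|A|)$.

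I would then reduce this to a total homotopy fiber computation. The connectivity of the map above is one more than that of the total homotopy fiber of the full $k$-cube $\mathcal{X}\colon A \mapsto FM_n(|A|)$ on $A \subseteq \{1,\dots,k\}$. I split $\mathcal{X}$ on a chosen axis (say whether $k \in A$) into a natural transformation of $(k-1)$-cubes $\{FM_n(|B|+1)\} \to \{FM_n(|B|)\}$ indexed by $B \subseteq \{1,\dots,k-1\}$, given by the ``forget input $k$'' restrictions. Each such map is modelled by a Fadell--Neuwirth fibration whose fiber is $\mathbb{R}^n$ minus $|B|$ points, which is $(n-2)$-connected for $|B| \geq 1$ and contractible for $|B| = 0$. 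Hence the total fiber of $\mathcal{X}$ equals the total fiber of the $(k-1)$-cube $\mathcal{F}\colon B \mapsto \mathbb{R}^n \setminus S_B$ of complements of finite subsets, with edges the inclusions that ``put a removed point back''.

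Finally, I would show by induction on $k$ that the total fiber of $\mathcal{F}$ is $((k-1)(n-2))$-connected. The base case $k=2$ gives a $1$-cube whose fiber is $S^{n-1}$, confirming the connectivity $n-2 = (k-1)(n-2)$. For the inductive step I would apply the cubical Blakers--Massey theorem to the complement cube, using the base-case computation at each stage and the fact that every edge has an $(n-2)$-connected homotopy fiber; equivalently, one can invoke the Goodwillie--Weiss manifold calculus convergence for the embedding functor of finite sets into $\mathbb{R}^n$, which directly supplies the $((k-1)(n-2)+1)$-connectivity of the $(k-1)$-st Taylor approximation. The main obstacle is the precise Blakers--Massey bookkeeping: the cube $\mathcal{F}$ is not literally strongly cocartesian, so obtaining the sharp connectivity $(k-1)(n-2)+1$ for the original map requires either a cubical Blakers--Massey tailored to open inclusions in a manifold, or a direct appeal to Goodwillie--Weiss as a black box.
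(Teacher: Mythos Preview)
Your proposal is correct and follows essentially the same route as the paper: both identify $\Cobound_k(E_n)$ with the homotopy limit of the punctured configuration-space $k$-cube and then invoke the known cartesianness estimate for that cube. The paper does this in one line by citing Munson--Voli\'c \cite[Ex.~6.2.9]{MV15}, which is exactly the Goodwillie--Weiss black box you mention at the end; your Fadell--Neuwirth splitting is the standard way that result is established.
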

\begin{proof}
In \cite[Ex 6.2.9]{MV15} Munson and Volic show that the $k$-cube of ordered configu\-ration spaces
defined by $S \mapsto \Conf(S,M)$ for $S\subset\{1,2,\dots,k\}$ and a fixed $n$-dimensional manifold $M$
is $((k-1)(n-2)+1)$-cartesian. 
\end{proof}

\begin{thm} Each homotopy fiber of
 \[
 \RNat(J_kE_n,J_kE_{n+d}) \rightarrow \RNat(\partial J_kE_n, \partial J_kE_{n+d})
 \]
 is $((k-1)(d-2)+1)$-connected.
\end{thm}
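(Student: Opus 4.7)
The plan is to identify $F_k$ with a space of $\Sigma_k$-equivariant lifts and then estimate its connectivity via obstruction theory, feeding in the two inputs provided by Lemmas \ref{hodim} and \ref{connec}. First I would unpack $F_k$ exactly as in the proof of the first lemma of Chapter \ref{AppendixEx}: over a chosen basepoint it is the total homotopy fiber of the square
\[
\begin{tikzcd}
\RMap_{\Sigma_k}(E_n(k),E_{n+d}(k)) \arrow[d] \arrow[r] & \RMap_{\Sigma_k}(E_n(k),\operatorname{Cobound}_k E_{n+d}) \arrow[d] \\
\RMap_{\Sigma_k}(\operatorname{Bound}_k E_n,E_{n+d}(k)) \arrow[r] & \RMap_{\Sigma_k}(\operatorname{Bound}_k E_n,\operatorname{Cobound}_k E_{n+d}),
\end{tikzcd}
\]
i.e.\ the space of $\Sigma_k$-equivariant lifting data (a map, two primary homotopies and a secondary homotopy) for
\[
\begin{tikzcd}
\operatorname{Bound}_k E_n \arrow[d] \arrow[r] & E_{n+d}(k) \arrow[d] \\
E_n(k) \arrow[r] \arrow[ur,dashed] & \operatorname{Cobound}_k E_{n+d}.
\end{tikzcd}
\]

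Next I would recognise this total fiber as the equivariant pointed mapping space
\[
\RMap_{\Sigma_k}\bigl(E_n(k)/\operatorname{Bound}_k E_n\,,\,\hofib(E_{n+d}(k)\to\operatorname{Cobound}_k E_{n+d})\bigr),
\]
and apply the standard equivariant obstruction-theoretic estimate
\[
\operatorname{conn}(F_k)\;\geq\;\operatorname{conn}\bigl(\hofib(E_{n+d}(k)\to\operatorname{Cobound}_k E_{n+d})\bigr)\,-\,\dim_{\mathrm{rel}}\bigl(E_n(k),\operatorname{Bound}_k E_n\bigr).
\]
Lemma \ref{connec}, applied with $n$ replaced by $n+d$, tells us that this map is $((k-1)(n+d-2)+1)$-connected, so its homotopy fiber is $((k-1)(n+d-2))$-connected; Lemma \ref{hodim} bounds the relative CW dimension by $n(k-1)-1$. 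The arithmetic
\[
(k-1)(n+d-2)-\bigl(n(k-1)-1\bigr)=(k-1)(d-2)+1
\]
then produces the claimed connectivity.

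The main technical obstacle is justifying the obstruction-theoretic estimate \emph{equivariantly}: it requires knowing that $(E_n(k),\operatorname{Bound}_k E_n)$ admits a relative $\Sigma_k$-CW structure whose relative cells are all \emph{free} $\Sigma_k$-cells of dimension at most $n(k-1)-1$. Using the Fulton--MacPherson model $FM_n(k)$ of Example \ref{FMop} together with the stratum analysis of Example \ref{FMexample}, the relative interior is the configuration space $C[k,n]$ on which $\Sigma_k$ acts freely, so the required free relative cell structure exists and the ordinary obstruction argument upgrades to its equivariant form. Once this is in place the connectivity estimate is immediate from the two input lemmas.
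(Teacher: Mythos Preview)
Your proof is correct and follows essentially the same route as the paper: identify $F_k$ with the total homotopy fiber of the mapping-space square (the paper invokes Remark~\ref{remarkThomas} rather than the lemma in Chapter~\ref{AppendixEx}, but the outcome is identical), then apply the connectivity-minus-dimension estimate using Lemmas~\ref{connec} and~\ref{hodim} and do the same arithmetic. Your explicit discussion of why the equivariant obstruction argument goes through (freeness of the $\Sigma_k$-action on the relative cells via the Fulton--MacPherson model) is a welcome elaboration of what the paper compresses into the phrase ``by general principles''.
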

\begin{proof}
By remark \ref{remarkThomas} the homotopy fiber is equivalent to a total homotopy fiber of the square
\[
 \begin{tikzcd}
 \Map(E_n(k),E_{n+d}(k)) \arrow[d] \arrow[r] & \Map(E_n(k),\cobound_kE_{n+d}) \arrow[d]  \\ \Map(\bound_kE_n,E_{n+d}(k)) \arrow[r] & \Map(\bound_kE_n, \cobound_kE_{n+d})
 \end{tikzcd}
\]
By general principles the connectivity of such a total homotopy fiber is not less than the connectivity of
$\hofib(E_{n+d}(k) \rightarrow \cobound_kE_{n+d})$ minus the relative homotopical dimension of the inclusion $\bound_kE_n \hookrightarrow E_n(k)$.
By \ref{hodim} the first number is at least $(k-1)(n+d-2)$ and the second number is $n(k-1)-1$ by \ref{connec}.
The difference of these numbers turns out to be $(k-1)(d-2)+1$.
\end{proof}
Thus we get:
\begin{cor}
Assume $d \geq 2$.
 The derived mapping space $\RHom(E_n,E_{n+d})$ is $(d-1)$-connected. Furthermore all
 spaces of derived maps between their truncations are $(d-1)$-connected as well.
\end{cor}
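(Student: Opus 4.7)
The plan is to run an induction on the tower of truncated derived mapping spaces, using Theorem \ref{mainthm} to identify the successive homotopy fibers and the tower convergence lemma to pass back to the full derived mapping space. Since $E_n$ and $E_{n+d}$ are $1$-reduced, the previously cited lemmas identify $\RHom(E_n, E_{n+d})$ with the derived mapping space between the reduced dendroidal Segal spaces $\iota^*N_dE_n$ and $\iota^*N_dE_{n+d}$, and the tower convergence lemma rewrites this (up to weak equivalence) as $\holim_k \RHom(U_k\iota^*N_dE_n,\, U_k\iota^*N_dE_{n+d})$.

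The heart of the argument is to show by induction on $k$ that each truncated mapping space $\RHom_k := \RHom(U_k\iota^*N_dE_n,\, U_k\iota^*N_dE_{n+d})$ is $(d-1)$-connected; this will yield the second assertion of the corollary directly. The base case $k = 1$ is precisely the first remark of this section, which asserts that the space is contractible and hence $(d-1)$-connected for any $d$. For the inductive step, Theorem \ref{mainthm} together with Remark \ref{remarkThomas} identifies the homotopy fiber of $\RHom_k \to \RHom_{k-1}$ with the fiber $F_k$ of the previous theorem, which was shown to be $((k-1)(d-2)+1)$-connected. The key numerical check is that for $d \geq 2$ and $k \geq 2$,
\[
(k-1)(d-2) + 1 \;\geq\; d - 1,
\]
which rearranges to $(k-2)(d-2) \geq 0$ and therefore always holds (with equality at the corner cases $k=2$ or $d=2$). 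Hence the fiber is at least $(d-1)$-connected at every stage, and the long exact sequence of homotopy groups of the fibration $F_k \to \RHom_k \to \RHom_{k-1}$ propagates the $(d-1)$-connectivity up the tower.

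The first assertion then follows by taking the homotopy limit: since every stage of the tower is $(d-1)$-connected and every structure map is (equivalent to) a fibration with $(d-1)$-connected fiber, the Milnor-type short exact sequences computing $\pi_*$ of a homotopy inverse limit (with $\lim$ and $\lim^1$ terms) show that $\holim_k \RHom_k$ is $(d-1)$-connected as well. Rather than a conceptual obstacle, the main thing to be careful about is the bookkeeping needed to verify that the translation $\RHom(E_n, E_{n+d}) \simeq \RHom(\iota^*N_dE_n,\, \iota^*N_dE_{n+d})$ is compatible with the tower of truncations, so that the inductive conclusion for $\RHom_k$ actually feeds back into a statement about $\RHom(E_n, E_{n+d})$; this is immediate from the naturality of $N_d$, $\iota^*$ and each $U_k$, combined with the fact that all three functors preserve the relevant weak equivalences.
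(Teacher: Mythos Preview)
Your proposal is correct and follows exactly the route the paper intends: induction up the tower starting from the contractible base $\RHom_1$, using Theorem~\ref{mainthm} (via Remark~\ref{remarkThomas}) to identify the successive homotopy fibers with the $F_k$ of the preceding theorem, then invoking tower convergence. The paper records no explicit proof beyond ``Thus we get:'', so your write-up is a faithful unpacking of that sentence; the one point worth making slightly more explicit is that in the Milnor sequence the relevant $\lim^1\pi_d$ term vanishes because the $(d-1)$-connectivity of the fibers forces the maps $\pi_d(\RHom_k)\to\pi_d(\RHom_{k-1})$ to be surjective.
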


\printbibliography[heading=bibintoc]

Math.~Institute, University of Münster, Einsteinstrasse 62, 48149 Münster, Germany \\
florian.goeppl@gmail.com \quad m.weiss@uni-muenster.de


\end{document}